\newcommand{\Ad}{\operatorname{Ad}}
\newcommand{\Span}{\operatorname{Span}}
\newcommand{\Tr}{\operatorname{Tr}}
   \theoremstyle{plain}
   \newtheorem{thm}{Theorem}[section]
   \newtheorem{prop}[thm]{Proposition}
   \newtheorem{lemma}[thm]{Lemma}  
   \newtheorem{cor}[thm]{Corollary}
   \theoremstyle{definition}
   \newtheorem{example}[thm]{Example}
   \theoremstyle{remark}
\newtheorem{ack}[thm]{Acknowledgement}
\definecolor{mybgcolor}{gray}{0.8}
\definecolor{myframecolor}{rgb}{.647,.129,.149}
\newmdenv[style=mystyle]{important}
   \numberwithin{equation}{section}
        \date{\today}
\title[Weights and K-theory]{On weights, traces and K-theory}
\author{Klaus Thomsen}
        \dedicatory{Til Palle E.T. J\o rgensen i taknemlighed}
\date{\today}
\address{Department of Mathematics, Aarhus University, Ny Munkegade, 8000 Aarhus C, Denmark}
\email{matkt@math.au.dk}
\begin{document}

\begin{abstract} It is shown that the pairing of the $K_{00}$ group of a $C^*$-algebra with the densely defined traces of the algebra can be extended to a pairing with the densely defined weights. For traces the pairing can be extended to the $K_0$ group without the semi-continuity assumption occurring in the work of Connes and Elliott. 

\end{abstract}

\maketitle
\section{Introduction} The pairing between the trace states and the $K_0$ group of a unital $C^*$-algebra is an important part of the invariant of $C^*$-algebras known as the Elliott invariant. I don't know who first observed the existence of this pairing, but it seems to be Connes who first observed that there is also a pairing with $K_0$ for non-unital $C^*$-algebras which uses traces that are only densely defined, but lower semi-continuous, \cite{Co1}. The latter pairing was revisited in a recent work by Elliott, \cite{E}. While Connes gives very few details of the construction, Elliott bases his approach on properties of the Pedersen ideal. The main purpose of the present text is to point out that the pairing exists also for traces that are not lower semi-continuous; it suffices that they are densely defined. This conclusion is obtained via a thorough examination of the constructions in \cite{Co1} and \cite{E}, and the original impetus for the present paper was the wish to point out that a condition, although natural from many points of view, is in fact not necessary. At first sight this may seem to have little or no significance because it can be difficult to come up with examples of densely defined traces that are not lower semi-continuous. Perhaps the only densely defined traces that come to mind and are not lower semi-continuous, are those coming from the Dixmier trace(s) on the $C^*$-algebra of compact operators. However, densely defined traces that are not lower semi-continuous exist in abundance, also on simple $C^*$-algebras very different from the $C^*$-algebra of compact operators. In an effort to convince the reader of this, the paper concludes by presenting constructions of such 'singular' traces on other $C^*$-algebras. It seems to me that it is worthwhile to pursue a more systematic investigation of such traces, and not only because they exist. But this is not the time or place.

The definition of the $K_0$-group $K_0(A)$ of a $C^*$-algebra $A$ starts with the formation of the group $K_{00}(A)$, cf. e.g. \cite{B}, and similarly the first step in the construction of the pairing between $K_0$ and traces is a pairing with $K_{00}(A)$. As a second purpose with this text, it is pointed out in the following first sections that there is a natural way to pair positive linear functionals, and more generally densely defined weights on $A$ with $K_{00}(A)$ in a way which generalizes the pairing with traces.

\begin{ack} I am grateful to the referees for their thoughtful and appropriate remarks and corrections. In particular, one of their remarks resulted in the deletion of an unnecessary assumption in Proposition \ref{02-01-23}.
\end{ack}
\section{Weights}

 Let $A$ be a $C^*$-algebra and let $A^+$ denote the cone of positive elements in $A$. A map $\psi : A^+ \to [0,\infty]$ is a \emph{weight} on $A$ when
\begin{itemize}
\item[(a)] $\psi(a+b) = \psi(a) + \psi(b), \ \ \forall a,b \in A^+$, and
\item[(b)] $\psi( t a) = t \psi(a), \ \ \forall a \in A^+, \ \forall t \in \mathbb R^+$, with the convention that $0 \cdot \infty = 0$.
\end{itemize}
A weight $\psi$ is \emph{densely defined} when 
$\left\{ a \in A^+ :   \psi(a) < \infty\right\}$ is dense in $A^+$ and a \emph{trace} when $\psi(aa^*) = \psi(a^*a)$ for all $a \in A$. A weight $\psi$ on $A$ is \emph{lower semi-continuous} when $\left\{a \in A^+ : \ \psi(a) > t\right\}$ is open in $A^+$ for all $t \in \mathbb R$. By a fundamental result of Combes, \cite{C}, this happens (if and) only if there is a set $\mathcal F$ of positive linear functionals on $A$ such that
$$
\psi(a) = \sup_{\omega \in \mathcal F} \omega(a), \ \ \forall a \in A^+.
$$
Given a weight $\psi$ on $A$ we set
$$
\mathcal M^+_\psi := \left\{ a \in A^+: \ \psi(a) < \infty\right\},
$$ 
$$
\mathcal N_\psi := \left\{ a \in A: \ \psi(a^*a) < \infty\right\},
$$
and
$$
\mathcal M_\psi := \Span \mathcal M^+_\psi.
$$
Based on work by Pedersen in \cite{Pe}, the following was observed by Combes in Lemme 1.1 and Lemme 1.3 of \cite{C}.

\begin{lemma}\label{04-11-21nx} Let $\psi$ be weight on $A$. The sets $\mathcal M_\psi$, $\mathcal M_\psi^+$ and $\mathcal N_\psi$ have the following properties.
\begin{itemize}
\item[(a)] $\mathcal N_\psi$ is a left-ideal in $A$; in particular, a subspace of $A$.
\item[(b)] $\mathcal M_\psi = \Span \mathcal N_\psi^* \mathcal N_\psi$ is a $*$-subalgebra of $A$.
\item[(c)]  $\mathcal M_\psi \cap A^+ = \mathcal M_\psi^+$.
\item[(d)] $\psi : \mathcal M^+_\psi \to [0,\infty)$ extends uniquely to a linear map $\psi|_{\mathcal M_\psi} : \mathcal M_\psi \to \mathbb C$.
\end{itemize}
\end{lemma}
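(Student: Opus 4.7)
The plan is to verify the four parts in turn, relying only on two elementary facts: that $\psi$ is additive and positively homogeneous on $A^+$, and the resulting monotonicity $\psi(x) \leq \psi(y)$ whenever $0 \leq x \leq y$ in $A^+$ (since $\psi(y) = \psi(x) + \psi(y-x)$). For (a), the subspace property of $\mathcal{N}_\psi$ will follow from the standard operator inequality $(a+b)^*(a+b) \leq 2(a^*a + b^*b)$ combined with monotonicity and additivity, while closure under scalar multiplication is immediate. The left-ideal property comes from $a^*b^*ba \leq \|b\|^2 a^*a$, which itself follows from writing $\|b\|^2 a^*a - a^*b^*ba = (ha)^*(ha)$ for $h$ the positive square root of $\|b\|^2 \cdot 1 - b^*b$ (taken in a unitization, with the sandwich $a^*(\cdot)a$ living back in $A$).

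For (b), the workhorse is the polarization identity $4 b^*a = \sum_{k=0}^{3} i^k (a + i^k b)^*(a + i^k b)$. Combined with the subspace property from (a), it shows $\mathcal{N}_\psi^*\mathcal{N}_\psi \subseteq \Span\{c^*c : c \in \mathcal{N}_\psi\}$, and the reverse inclusion is trivial. The set $\{c^*c : c \in \mathcal{N}_\psi\}$ equals $\mathcal{M}_\psi^+$: the inclusion $\subseteq$ holds by definition of $\mathcal{N}_\psi$, while for $\supseteq$ one takes the positive square root, observing that $\sqrt{x} \in \mathcal{N}_\psi$ whenever $x \in \mathcal{M}_\psi^+$. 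Thus $\mathcal{M}_\psi = \Span \mathcal{N}_\psi^*\mathcal{N}_\psi$. The $*$-algebra structure then follows: $*$-invariance is immediate, and $(a^*b)(c^*d) = a^*(bc^*d) \in \mathcal{N}_\psi^*\mathcal{N}_\psi$ since $bc^*d \in \mathcal{N}_\psi$ by the left-ideal property.

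For (c), the containment $\mathcal{M}_\psi^+ \subseteq \mathcal{M}_\psi \cap A^+$ is trivial. For the reverse, given $a \in \mathcal{M}_\psi \cap A^+$, use (b) to write $a = x_1 - x_2 + i(x_3 - x_4)$ with $x_j \in \mathcal{M}_\psi^+$; since $a = a^* \geq 0$, one gets $0 \leq a \leq x_1$, and monotonicity gives $\psi(a) \leq \psi(x_1) < \infty$. For (d), define the extension by the standard formula $\psi(x_1 - x_2 + i(x_3 - x_4)) := \psi(x_1) - \psi(x_2) + i(\psi(x_3) - \psi(x_4))$; well-definedness is clear from additivity of $\psi$ on $\mathcal{M}_\psi^+$ (rearrange two decompositions to get an equality between positive elements, then apply additivity), linearity follows from positive homogeneity and additivity, and uniqueness is automatic since $\mathcal{M}_\psi^+$ spans $\mathcal{M}_\psi$. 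The one step requiring a moment of care is (c), which crucially leans on the span-description from (b) in order to dominate a positive element of $\mathcal{M}_\psi$ by a manifestly $\psi$-finite positive one; everything else is routine bookkeeping built from monotonicity and polarization.
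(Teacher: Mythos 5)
Your proof is correct; note that the paper itself offers no proof of this lemma, deferring instead to Lemme 1.1 and Lemme 1.3 of Combes \cite{C}, and your argument --- the inequalities $(a+b)^*(a+b)\leq 2(a^*a+b^*b)$ and $a^*b^*ba\leq \|b\|^2a^*a$, monotonicity of $\psi$ from additivity, polarization for (b), and domination of a positive element of $\mathcal M_\psi$ by the positive part of its decomposition for (c) --- is exactly the standard one recorded there. There is nothing to correct.
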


\section{Pairing weights with $K_{00}$} 

The following brief description of the $K_0$ group of a $C^*$-algebra $A$ serves primarily to introduce the notation we shall use. For more details, see Blackadars book \cite{B} or R\o rdam et al. \cite{RLL}, for example.
Let $M_k(A)$ denote the $C^*$-algebra of $k \times k$ matrices over $A$ and let $P_k(A)$ denote the set of projections in $M_k(A)$. We embed $M_k(A)$ into $M_{k+1}(A)$ by adding a zero row and a zero column, and we can therefore consider the unions
$$
M_\infty(A) := \bigcup_{k=1}^\infty M_k(A) ,
$$
and
$$
P_\infty(A) := \bigcup_{k=1}^\infty P_k(A) .
$$
The Murray-von Neumann equivalence relation $\sim$ is defined in $P_\infty(A)$ such that $p \sim q$ when there is an element $v \in M_\infty(A)$ with $vv^* = p$ and $v^*v = q$. The equivalence class of an element $e \in P_\infty(A)$ is written $[e]$. Given a pair of projections $p,q \in P_\infty(A)$ there is a projection $p' \in P_{\infty}(A)$ such that $p' \perp q$, i.e. such that $p'q = 0$, and such that $p \sim p'$. The sum $p'+q$ is then a projection, and the sum $[p]+[q]$ is defined by
$$
[p]+[q] := [p'+q] .
$$
This turns $V(A) := P_\infty(A)/\sim$ into an abelian semi-group. The group $K_{00}(A)$ is the Grothendieck group of $V(A)$. To obtain the $K_0$ group, add a unit to $A$ to get the unital $C^*$-algebra $A^\dagger$ and define $K_0(A)$ as the kernel of the map
$$
K_{00}(A^\dagger) \to K_{00}(\mathbb C)
$$
induced by the quotient map $A^\dagger \to \mathbb C$. Since $[e]$ is an element of this kernel when $e \in P_\infty(A) \subseteq P_\infty(A^\dagger)$, there is a canonical map
\begin{equation}\label{21-10-22}
K_{00}(A) \to K_0(A)
\end{equation}
which is an isomorphism when $A$ is unital or at least has an approximate unit consisting of projections, cf. \cite{B}.


Let $\psi$ be a weight on the $C^*$-algebra $A$. For each $k \in \mathbb N$ we define a weight $\psi_k$ on $M_k(A)$ such that
$$
\psi_k(x) := \sum_{i=1}^k \psi(x_{ii}) = \psi(\sum_{i=1}^k x_{ii})
$$
when $x =(x_{ij}) \in M_k(A)^+$. This definition is compatible with the embedding of $M_k(A)$ into $M_{k+1}(A)$ and we get a map
$$
\psi_\infty : P_\infty(A) \to  [0,\infty]
$$
whose restriction to $P_k(A)$ agrees with $\psi_k$ for all $k$.

\begin{lemma}\label{26-09-22x}  For each $k \in \mathbb N$,
\begin{itemize}
\item[(e)] $\mathcal N_{\psi_k} = M_k(\mathcal N_\psi)$, and
\item[(f)]  $\mathcal M_{\psi_k} = M_k(\mathcal M_\psi)$.
\end{itemize}
\end{lemma}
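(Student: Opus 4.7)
The plan is to prove (e) and (f) by direct computation, using Lemma \ref{04-11-21nx}(b) which tells us that $\mathcal M_\psi = \Span \mathcal N_\psi^* \mathcal N_\psi$ and similarly $\mathcal M_{\psi_k} = \Span \mathcal N_{\psi_k}^* \mathcal N_{\psi_k}$.

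For (e), I would compute that for $x = (x_{ij}) \in M_k(A)$, the $(i,i)$-entry of $x^*x$ is $\sum_{j=1}^k x_{ji}^* x_{ji}$, so by the definition of $\psi_k$ together with additivity of $\psi$ on $A^+$,
\eq{\psi_k(x^*x) \;=\; \sum_{i=1}^k \psi\Bigl(\sum_{j=1}^k x_{ji}^* x_{ji}\Bigr) \;=\; \sum_{i,j=1}^k \psi(x_{ji}^* x_{ji}).}
This sum of non-negative terms is finite if and only if each $\psi(x_{ji}^*x_{ji}) < \infty$, i.e. if and only if every entry $x_{ji}$ lies in $\mathcal N_\psi$. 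This gives $\mathcal N_{\psi_k} = M_k(\mathcal N_\psi)$.

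For (f), the inclusion $\mathcal M_{\psi_k} \subseteq M_k(\mathcal M_\psi)$ follows from (e) and Lemma \ref{04-11-21nx}(b): if $x, y \in M_k(\mathcal N_\psi)$, then the $(i,j)$-entry of $x^*y$ is $\sum_\ell x_{\ell i}^* y_{\ell j} \in \mathcal N_\psi^*\mathcal N_\psi \subseteq \mathcal M_\psi$, so spanning over such products gives matrices whose entries all lie in $\mathcal M_\psi$. For the reverse inclusion, given $a \in \mathcal M_\psi$, I would write $a = \sum_m c_m a_m^* b_m$ with $a_m, b_m \in \mathcal N_\psi$ and $c_m \in \mathbb C$ (using Lemma \ref{04-11-21nx}(b)), and note that for each pair $(i,j)$, the elementary matrix $a E_{i,j}$ admits the factorisation
\eq{a E_{i,j} \;=\; \sum_m c_m (a_m E_{1,i})^* (b_m E_{1,j}),}
where $a_m E_{1,i}$ and $b_m E_{1,j}$ belong to $M_k(\mathcal N_\psi) = \mathcal N_{\psi_k}$ by (e). Hence $a E_{i,j} \in \Span \mathcal N_{\psi_k}^* \mathcal N_{\psi_k} = \mathcal M_{\psi_k}$, and since every element of $M_k(\mathcal M_\psi)$ is a sum of such elementary matrices, the inclusion $M_k(\mathcal M_\psi) \subseteq \mathcal M_{\psi_k}$ follows.

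There is no real obstacle here; the argument is essentially bookkeeping. The only point requiring a touch of care is to make sure the characterisation $\mathcal M_\psi = \Span \mathcal N_\psi^* \mathcal N_\psi$ from Lemma \ref{04-11-21nx}(b) is used, rather than the direct definition $\Span \mathcal M_\psi^+$, since this is what lets us realise arbitrary off-diagonal entries as products $(a_m E_{1,i})^*(b_m E_{1,j})$ of elements of $\mathcal N_{\psi_k}$.
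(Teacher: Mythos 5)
Your proposal is correct and follows essentially the same route as the paper: the same entrywise computation of $\psi_k(x^*x)$ for (e), and for (f) the same factorisation $a_m^*b_m E_{i,j} = (a_m E_{1,i})^*(b_m E_{1,j})$ via Lemma \ref{04-11-21nx}(b), with the only cosmetic difference that you invoke (e) to see $a_m E_{1,i} \in \mathcal N_{\psi_k}$ where the paper verifies it by computing $(a_m E_{1,i})^*(a_m E_{1,i}) = a_m^*a_m \otimes e_{ii}$ directly.
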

\begin{proof} (e) follows from the observation that
$$
 \psi_k(x^*x) = \sum_{i,j} \psi(x_{ji}^*x_{ji})
$$
when $ x = (x_{ij}) \in M_k(A)$. To prove (f) note that it follows from (e) and (b) that 
$$
\mathcal M_{\psi_k} =\Span \mathcal N_{\psi_k}^*\mathcal N_{\psi_k} \subseteq \Span M_k(\mathcal N_\psi^* \mathcal N_\psi) \subseteq M_k(\mathcal M_\psi). 
$$
For the converse inclusion, consider $a,b \in \mathcal N_\psi$ and the standard matrix units $\{e_{ij}\}$ in $M_k(\mathbb C)$. Then $a^*b \otimes e_{ij} = (a \otimes e_{1i})^*(b \otimes e_{1j})$. Note that $(a \otimes e_{1i})^*(a \otimes e_{1i}) = a^*a\otimes e_{ii} \in \mathcal M_{\psi_k}^+$ and similarly $(b \otimes e_{1j})^*(b \otimes e_{1j}) = b^*b\otimes e_{jj} \in \mathcal M_{\psi_k}^+$. Hence $a \otimes e_{1i} \in \mathcal N_{\psi_k}$ and $b \otimes e_{1j} \in \mathcal N_{\psi_k}$, and thus $a^*b \otimes e_{ij} \in \mathcal N_{\psi_k}^*\mathcal N_{\psi_k} \subseteq \mathcal M_{\psi_k}$ by (b). Since $i,j,a,b$ were arbitrary it follows now also from (b) that $M_k(\mathcal M_\psi) = M_k\left(\Span \mathcal N_\psi^*\mathcal N_\psi\right) \subseteq \mathcal M_{\psi_k}$.
\end{proof}

\begin{lemma}\label{29-09-22x} Let $\psi$ be a densely defined weight on $A$. Then $\psi_k$ is densely defined for all $k \in \mathbb N$.
\end{lemma}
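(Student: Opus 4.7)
The plan is to verify that $\mathcal M_{\psi_k}^+$ is dense in $M_k(A)^+$. By \reflemma{26-09-22x} (f) together with part (c) of \reflemma{04-11-21nx}, this set coincides with $M_k(\mathcal M_\psi) \cap M_k(A)^+$, so it suffices to approximate each $x \in M_k(A)^+$ by such matrices.

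First I would show that $\mathcal N_\psi$ is norm-dense in $A$. This is the one point that needs separate care: an element $a \in \mathcal M_\psi^+$ need not belong to $\mathcal N_\psi$, because $\psi(a^2)$ may well be infinite. The remedy is to pass through square roots. Given $b \in A^+$, the hypothesis provides a sequence $c_n \in \mathcal M_\psi^+$ with $c_n \to b^2$; the norm-continuity of the positive square root on $A^+$ then gives $c_n^{1/2} \to b$, and each $c_n^{1/2}$ lies in $\mathcal N_\psi$ because $(c_n^{1/2})^* c_n^{1/2} = c_n \in \mathcal M_\psi^+$. Hence $\mathcal N_\psi \cap A^+$ is dense in $A^+$. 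Since every element of $A$ is a linear combination of four positive elements and $\mathcal N_\psi$ is a subspace by part (a) of \reflemma{04-11-21nx}, it follows that $\mathcal N_\psi$ is norm-dense in $A$.

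Next, given $x \in M_k(A)^+$, I would set $y := x^{1/2} \in M_k(A)$. Approximating each entry of $y$ by elements of $\mathcal N_\psi$ produces a sequence $y_n \in M_k(\mathcal N_\psi)$ with $y_n \to y$ in $M_k(A)$, and joint continuity of multiplication and involution then forces $y_n^* y_n \to y^* y = x$. By \reflemma{26-09-22x} (e) we have $y_n \in \mathcal N_{\psi_k}$, so $y_n^* y_n \in \mathcal N_{\psi_k}^* \mathcal N_{\psi_k} \subseteq \mathcal M_{\psi_k}$ by part (b) of \reflemma{04-11-21nx}, and part (c) of the same lemma places $y_n^* y_n$ in $\mathcal M_{\psi_k}^+$. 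This delivers the required approximation of $x$ by elements of $\mathcal M_{\psi_k}^+$.

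The only step that requires any real thought is the bootstrap from density of $\mathcal M_\psi^+$ in $A^+$ to density of $\mathcal N_\psi$ in $A$; once this is in hand, the matricial extension is a formal consequence of \reflemma{04-11-21nx} and \reflemma{26-09-22x}.
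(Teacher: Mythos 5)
Your proof is correct and follows essentially the same route as the paper: both arguments reduce the problem to the norm-density of $\mathcal N_\psi$ in $A$ and then approximate $x^{1/2}$ by matrices over $\mathcal N_\psi$, landing in $\mathcal M_{\psi_k}^+$ via (e), (b) and (c). The only difference is your square-root detour to get density of $\mathcal N_\psi$; it works, but the worry motivating it is unfounded, since $a\in\mathcal M_\psi^+$ automatically lies in $\mathcal N_\psi$ because $a^2\leq \|a\|a$ gives $\psi(a^2)\leq\|a\|\psi(a)<\infty$ (equivalently, $\mathcal M_\psi\subseteq\mathcal N_\psi$ by (a) and (b) of Lemma~\ref{04-11-21nx}, which is the shortcut the paper takes).
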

\begin{proof} Let $x \in M_k(A)^+$. Write $x = a^2$ where $a \in  M_k(A)^+$. Since every element of $A$ is a linear combination of four elements of $A^+$ it follows that $\mathcal M_\psi$ is dense in $A$ since $\psi$ is densely defined. It follows from (a) and (b) that $\mathcal M_\psi \subseteq \mathcal N_\psi$ and hence $\mathcal N_\psi$ is also dense in $A$. It follows therefore that $M_k(\mathcal N_\psi)$ is dense in $M_n(A)$. In particular, there is a sequence $\{a_n\}$ in $M_k(\mathcal N_\psi)$ such that $\lim_{n \to \infty} a_n = a$. Note that $a_n^*a_n \in \mathcal N_{\psi_k}^*\mathcal N_{\psi_k}$ by (e) and hence $a_n^*a_n \in \mathcal M_{\psi_k}^+$ by (b) and (c). Since $\lim_{n \to \infty} a_n^*a_n =x$, we have shown that $\psi_k$ is densely defined. 
\end{proof}

\begin{lemma}\label{24-09-22x} Let $\psi$ be a densely defined weight on $A$ and let $e \in P_k(A)$. There is a projection $f \in P_k(A)$ such that $e \sim f$ and $\psi_k(f) < \infty$.
\end{lemma}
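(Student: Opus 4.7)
The plan is to combine the density of $\mathcal M^+_{\psi_k}$ in $M_k(A)^+$ (Lemma \ref{29-09-22x}) with the standard functional-calculus trick that converts an element close to a projection into an honest projection. I first fix $\epsilon \in (0, 1/4)$ and choose $a \in \mathcal M^+_{\psi_k}$ with $\|a - e\| < \epsilon$. Because $a \geq 0$ and $e$ is a projection with $\sigma(e) \subseteq \{0,1\}$, the norm inequality forces $\sigma(a) \subseteq [0, \epsilon] \cup [1-\epsilon, 1+\epsilon]$, so $a$ has a spectral gap straddling $1/2$.

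Next I pick a continuous $g \colon [0, \infty) \to [0, 1]$ that vanishes on $[0, \epsilon]$, equals $1$ on $[1-\epsilon, \infty)$, and is affine on the bridging interval, and set $f := g(a)$. Since $g(0) = 0$, $f \in C^*(a) \subseteq M_k(A)$; and since $g^2 = g$ on $\sigma(a)$, $f$ is a projection. Moreover $g(t) \leq 1 \leq t/\epsilon$ on $[\epsilon, \infty)$ and $g(t) = 0$ on $[0, \epsilon]$, so $0 \leq f \leq \epsilon^{-1} a$. The cone $\mathcal M^+_{\psi_k}$ is evidently hereditary (additivity of $\psi_k$ on $A^+$ gives $\psi_k(b) \leq \psi_k(a)$ whenever $0 \leq b \leq a$), so $f \in \mathcal M^+_{\psi_k}$; in particular $\psi_k(f) < \infty$.

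Finally, to conclude $e \sim f$, I would estimate $\|g(a) - a\| = \sup_{t \in \sigma(a)} |g(t)-t| \leq \epsilon$ (an easy check on the two components of $\sigma(a)$), whence $\|f - e\| \leq \|f - a\| + \|a - e\| < 2\epsilon < 1$; two projections in a $C^*$-algebra at distance less than $1$ are always Murray--von Neumann equivalent. The one nontrivial point is that one cannot take $f$ to be the honest spectral projection of $a$ supported on $[1-\epsilon, 1+\epsilon]$, since that projection lies a priori only in the bidual $M_k(A)^{**}$; the continuous $g$ is engineered so that $g^2 = g$ on $\sigma(a)$ (placing $f$ into $M_k(A)$) while simultaneously being dominated by a multiple of the identity function $t$ near $0$ (so that heredity of $\mathcal M^+_{\psi_k}$ delivers finiteness of $\psi_k(f)$). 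Both requirements are needed, and reconciling them is what makes the choice of $g$ the delicate step.
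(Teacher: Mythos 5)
Your proof is correct and follows essentially the same route as the paper's: approximate $e$ by a positive element $a \in \mathcal M^+_{\psi_k}$ with a spectral gap around $\tfrac12$, apply a continuous function that is idempotent on $\sigma(a)$ to obtain a projection $f$ within distance $<1$ of $e$ (hence $f \sim e$), and dominate $f$ by a scalar multiple of $a$ to conclude $\psi_k(f) < \infty$ from monotonicity of the weight. The only cosmetic difference is the domination constant ($\epsilon^{-1}$ in your version versus $(1-\delta)^{-1}$ in the paper's, which first normalizes $0 \leq a \leq 1$); both are immaterial.
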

\begin{proof} Let $\delta \in ]0,\frac{1}{2}[$ and define $f: [0,1] \to [0,1]$ such that $f$ is continuous and
$$
f(t) = \begin{cases} 0, & \ t \in [0,\delta] \\ \text{linear}, & \ t \in [\delta,1-\delta] \\ 1, & \ t \in [1-\delta,1]. \end{cases}
$$
Then 
\begin{equation}\label{28-09-22ax}
\left|f(t) -t\right| \leq \delta \ \ \forall t \in [0,1] .
\end{equation} 
$\psi_k$ is densely defined by Lemma \ref{29-09-22x} and we can therefore find $a \in \mathcal M_{\psi_k}^+$ such that $0 \leq a \leq 1$ and $\|a - e\|$ is as small as we want. In particular, we can arrange that the spectrum $\sigma(a)$ of $a$ is contained in $[0,\delta] \cup [1-\delta,1]$ and $\|a-e\| \leq \delta$. It follows from \eqref{28-09-22ax} that $\left\|f(a) - a \right\| \leq \delta$. Note that $f(a)$ is a projection in $M_k(A)$ and that $\|f(a) - e \| \leq 2 \delta$. Since $2 \delta < 1$ it follows that $f(a) \sim e$, cf. e.g. Proposition 4.6.6 in \cite{B}. Since $f(t) \leq (1-\delta)^{-1} t$ for all $t \in \sigma(a)$, it follows that $f(a) \leq (1-\delta)^{-1} a$ and hence $\psi_k(f(a)) \leq (1-\delta)^{-1}\psi_k(a) < \infty$. 

\end{proof}

 For $e \in P_\infty(A)$, set
$$
\underline{\psi}(e) := \inf \left\{\psi_\infty(f): \ f \in P_\infty(A), \ f \sim e \right\} .
$$
Then Lemma \ref{24-09-22x} has the following corollary.

\begin{cor}\label{24-09-22a}  Let $\psi$ be a densely defined weight on $A$. Then $\underline{\psi}(e) < \infty$ for all $e \in P_\infty(A)$.
\end{cor}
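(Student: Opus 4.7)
The statement is an essentially immediate consequence of Lemma \ref{24-09-22x}, so my plan is short. Any $e \in P_\infty(A)$ lies in some $P_k(A)$ by definition of the increasing union $P_\infty(A) = \bigcup_k P_k(A)$. Fix such a $k$.

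Apply Lemma \ref{24-09-22x} to this $e \in P_k(A)$: it produces a projection $f \in P_k(A)$ with $e \sim f$ and $\psi_k(f) < \infty$. Since the map $\psi_\infty$ is defined so that its restriction to $P_k(A)$ coincides with $\psi_k$, we have $\psi_\infty(f) = \psi_k(f) < \infty$. Taking the infimum over all projections in $P_\infty(A)$ Murray--von Neumann equivalent to $e$ then gives $\underline{\psi}(e) \leq \psi_\infty(f) < \infty$.

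There is nothing to overcome here beyond quoting the previous lemma: the only subtlety is the compatibility of $\psi_\infty$ with the various $\psi_k$, which is built into the construction of $\psi_\infty$ (it is defined via the compatibility of $\psi_k$ with the embedding $M_k(A) \hookrightarrow M_{k+1}(A)$). So the entire proof is just a single appeal to Lemma \ref{24-09-22x} plus the remark that the candidate $f$ produced there witnesses the finiteness of the infimum defining $\underline{\psi}(e)$.
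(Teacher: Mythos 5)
Your proof is correct and is exactly the argument the paper intends: the corollary is stated there without proof as an immediate consequence of Lemma \ref{24-09-22x}, and your write-up simply makes explicit the (trivial) step that the projection $f$ supplied by that lemma witnesses the finiteness of the infimum defining $\underline{\psi}(e)$.
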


\begin{lemma}\label{24-09-22bx}  Let $\psi$ be a densely defined weight on $A$. Let $e,f \in P_\infty(A)$ such that $ef = 0$. Then $\underline{\psi}(e+f) = \underline{\psi}(e) + \underline{\psi}(f)$.
\end{lemma}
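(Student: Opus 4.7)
The plan is to prove the two inequalities $\underline{\psi}(e+f) \le \underline{\psi}(e) + \underline{\psi}(f)$ and $\underline{\psi}(e+f) \ge \underline{\psi}(e) + \underline{\psi}(f)$ separately. (Note that $e+f$ is a projection because $ef=0$, so the left-hand side is well-defined.) Both directions will rest on the additivity of the matrix weights $\psi_k$ on $M_k(A)^+$ built into their definition, together with the basic Murray--von Neumann manipulations recalled at the start of Section~3.

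For the upper bound I would fix $\varepsilon>0$ and, using the definition of $\underline{\psi}$, pick representatives $e' \in P_k(A)$ and $f' \in P_l(A)$ with $e' \sim e$, $f' \sim f$, and $\psi_k(e') + \psi_l(f') \le \underline{\psi}(e) + \underline{\psi}(f) + \varepsilon$. The block-diagonal projection $e' \oplus f' \in P_{k+l}(A)$ is equivalent to $e+f$ in $P_\infty(A)$: a standard partial isometry shifts $f'$ into the lower-right block and the relation $ef=0$ identifies $[e]+[f]$ with $[e+f]$ in $V(A)$. The definition of $\psi_{k+l}$ gives $\psi_{k+l}(e'\oplus f') = \psi_k(e') + \psi_l(f')$, so $\underline{\psi}(e+f) \le \underline{\psi}(e) + \underline{\psi}(f) + \varepsilon$, and $\varepsilon \to 0$ closes this direction.

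For the lower bound I would use Corollary~\ref{24-09-22a} and the definition of $\underline{\psi}$ to pick $g \in P_\infty(A)$ with $g \sim e+f$ and $\psi_\infty(g) \le \underline{\psi}(e+f) + \varepsilon$, together with $v \in M_\infty(A)$ implementing the equivalence: $v^*v = e+f$ and $vv^* = g$. The decisive move is to set $g_1 := vev^*$ and $g_2 := vfv^*$. A direct calculation using $ef = fe = 0$ gives $g_1^2 = ve(e+f)ev^* = ve^2v^* = g_1$ (and similarly $g_2^2=g_2$); $g_1 g_2 = ve(e+f)fv^* = 2vefv^* = 0$; and $g_1+g_2 = v(e+f)v^* = (vv^*)(vv^*) = g$. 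The element $ve$ satisfies $(ve)^*(ve) = e(e+f)e = e$ and $(ve)(ve)^* = g_1$, so $g_1 \sim e$, and similarly $vf$ certifies $g_2 \sim f$. Additivity of $\psi_\infty$ on orthogonal summands then yields $\psi_\infty(g) = \psi_\infty(g_1) + \psi_\infty(g_2) \ge \underline{\psi}(e) + \underline{\psi}(f)$, and letting $\varepsilon \to 0$ completes the argument.

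The main obstacle is the lower bound: any projection $g$ competing for the infimum defining $\underline{\psi}(e+f)$ must be shown to split into two orthogonal subprojections individually equivalent to $e$ and to $f$. The construction above works precisely because, under the hypothesis $ef=0$, the projection $e+f$ carries the tautological orthogonal decomposition $e + f$, which can be transported through the intertwiner $v$ by conjugation to produce the required decomposition $g = g_1 + g_2$. The upper bound, by contrast, is a soft matter of placing orthogonal representatives into block-diagonal position.
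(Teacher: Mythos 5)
Your proof is correct. The lower bound is essentially the paper's argument: the paper compresses it to the single assertion that any $p\sim e+f$ can be written as $p=p_1+p_2$ with $p_1\sim e$ and $p_2\sim f$, and you supply exactly the justification it omits, namely transporting the tautological decomposition of $e+f$ through the intertwiner via $g_1=vev^*$, $g_2=vfv^*$, with $ve$ and $vf$ certifying the equivalences (your intermediate ``$2vefv^*$'' is harmless since $e(e+f)f=2ef=0$ either way). For the upper bound the two arguments diverge. The paper keeps both representatives $p,q$ inside the same $M_k(A)$ and forces orthogonality by conjugating $p$ with a scalar unitary $1_A\otimes S$, $S\in M_{2k}(\mathbb C)$; it must then prove the claim \eqref{24-09-22cx} that $\psi_{2k}$ is invariant under $\Ad(1_A\otimes S)$, which it does via the tensor-product identity \eqref{28-09-22x} on $\mathcal M_{\psi_{2k}}$, drawing on parts (d) and (f) of the preliminary lemmas. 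You instead put the representatives in block-diagonal position in $M_{k+l}(A)$, where the additivity $\psi_{k+l}(e'\oplus f')=\psi_k(e')+\psi_l(f')$ is immediate from the definition of $\psi_{k+l}$ as the sum of $\psi$ over the diagonal entries; the only other input is the standard identity $[e'\oplus f']=[e']+[f']=[e]+[f]=[e+f]$ in $V(A)$, which is legitimate since equality in $V(A)$ is by definition Murray--von Neumann equivalence. Your route is shorter and avoids the conjugation-invariance claim entirely; the paper's route proves that claim along the way, but it is not needed elsewhere, so nothing is lost by your shortcut.
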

\begin{proof} When $p \in P_\infty(A)$ and $p \sim e + f$ we can write $p = p_1+p_2$ where $p_1 \sim e$ and $p_2 \sim f$. Then $\psi_\infty(p) = \psi_\infty(p_1) + \psi_\infty (p_2) \geq \underline{\psi}(e)+ \underline{\psi}(f)$. It follows that $\underline{\psi}(e+f) \geq \underline{\psi}(e) + \underline{\psi}(f)$. To obtain the reverse inequality, let $\epsilon > 0$. There is a $k \in \mathbb N$ such that $e,f \in P_k(A)$, and by increasing $k$ if necessary we can also arrange that there are projections $p,q \in P_k(A)$ such that $e \sim p, \ f \sim q$, $\psi_k(p) \leq \underline{\psi}(e)+\epsilon$ and $  \psi_k(q) \leq \underline{\psi}(f)+ \epsilon$. There is then a unitary $S \in M_{2k}(\mathbb C)$ such that $(1_A \otimes S)p(1_A \otimes S)^* q = 0$. Then 
\begin{equation}\label{26-09-22ax}
e + f \sim (1_A \otimes S)p(1_A \otimes S)^* + q. 
\end{equation}
We claim that
\begin{equation}\label{24-09-22cx}
\psi_{2k}\left(  (1_A \otimes S)p(1_A \otimes S)^*\right) = \psi_k(p).
\end{equation}
To see this, note that since $p \in \mathcal M_{\psi_{2k}}^+$ it follows from (f) of Lemma \ref{26-09-22x} that $p \in  M_{2k}(\mathcal M_\psi) = \mathcal M_\psi \otimes M_{2k}(\mathbb C)$. Thanks to (d) and (f) it makes sense to assert that
\begin{equation}\label{28-09-22x}
\psi_{2k}|_{\mathcal M_{\psi_{2k}}}= \psi|_{\mathcal M_\psi} \otimes \Tr_{2k} ,
\end{equation}
where $\Tr_{2k}$ is the standard trace on $M_{2k}(\mathbb C)$; the sum of the diagonal entries. Furthermore, the identity \eqref{28-09-22x} can be easily verified by checking on simple tensors. Since $\Ad (1_A \otimes S)(M_{2k}(\mathcal M_\psi)) = M_{2k}(\mathcal M_\psi)$, it follows from this that
\begin{align*}
& \psi_{2k}\left(  (1_A \otimes S)p(1_A \otimes S)^*\right) = \psi_{2k} \circ \Ad (1_A \otimes S)(p)\\
& = (\psi|_{\mathcal M_\psi} \otimes \Tr_{2k} )\circ \Ad (1_A \otimes S)(p) \\
& = \left(\psi|_{\mathcal M_\psi} \otimes (\Tr_{2k} \circ \Ad S)\right)(p)\\
& = \left(\psi|_{\mathcal M_\psi} \otimes \Tr_{2k}\right)(p) = \psi_{2k}(p) = \psi_k(p),
\end{align*}
proving the claim \eqref{24-09-22cx}. It follows that
$$
\psi_\infty( (1_A \otimes S)p(1_A \otimes S)^* + q) = \psi_\infty(p) + \psi_\infty(q) \leq \underline{\psi}(e) + \underline{\psi}(f) + 2\epsilon,
$$
and then \eqref{26-09-22ax} implies that $\underline{\psi}(e+f) \leq \underline{\psi}(e) + \underline{\psi}(f) + 2\epsilon$. Since $\epsilon > 0$ was arbitrary it follows that $\underline{\psi}(e+f) \leq \underline{\psi}(e) + \underline{\psi}(f)$.
\end{proof}

The following is now an immediate consequence of the definitions and Corollary \ref{24-09-22x} and Lemma \ref{24-09-22bx}.

\begin{prop}\label{24-09-22dx} Given a densely defined weight $\psi$ on $A$, there is a homomorphism $\psi_* : K_{00}(A) \to \mathbb R$ such that
$$
\psi_*([e] -[f]) = \underline{\psi}(e) - \underline{\psi}(f)
$$
when $e,f \in P_\infty(A)$.
\end{prop}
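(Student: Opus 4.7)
The plan is to invoke the universal property of the Grothendieck group. All of the substantive work has already been done in Corollary \ref{24-09-22a} and Lemma \ref{24-09-22bx}; what remains is to verify that $\underline{\psi}$ descends to a well-defined additive map on the semi-group $V(A)$, and then extend.

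First, I would check that $\underline{\psi}$ is constant on Murray-von Neumann equivalence classes. This is essentially tautological from the definition: if $e \sim e'$ in $P_\infty(A)$, then $\{f \in P_\infty(A):\ f \sim e\} = \{f \in P_\infty(A):\ f \sim e'\}$, so the infima defining $\underline{\psi}(e)$ and $\underline{\psi}(e')$ are taken over the same set. Thus $\underline{\psi}$ induces a map $V(A) \to [0,\infty)$ (finiteness coming from Corollary \ref{24-09-22a}), which I will also denote $\underline{\psi}$.

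Next, I would check additivity on $V(A)$. Given classes $[e], [f] \in V(A)$, the sum $[e]+[f]$ is represented by $e' + f$ where $e' \sim e$ and $e'f = 0$. By the first step combined with Lemma \ref{24-09-22bx},
\eq{\underline{\psi}([e]+[f]) = \underline{\psi}(e'+f) = \underline{\psi}(e') + \underline{\psi}(f) = \underline{\psi}(e) + \underline{\psi}(f).}
Hence $\underline{\psi}: V(A) \to [0,\infty)$ is a homomorphism of abelian semi-groups, where $[0,\infty)$ is viewed as a sub-semi-group of the additive group $\mathbb R$.

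Finally, by the universal property of the Grothendieck construction, $\underline{\psi}$ extends uniquely to a group homomorphism $\psi_* : K_{00}(A) \to \mathbb R$, and since every element of $K_{00}(A)$ can be written as $[e]-[f]$ for some $e, f \in P_\infty(A)$, the formula $\psi_*([e]-[f]) = \underline{\psi}(e) - \underline{\psi}(f)$ holds. There is no real obstacle here; the only thing to be mildly careful about is that the Grothendieck group of $V(A)$ may have torsion coming from the cancellation relation, but the homomorphism through the Grothendieck construction handles this automatically via the universal property.
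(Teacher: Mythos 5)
Your argument is correct and is exactly the route the paper takes: the paper gives no separate proof, stating only that the proposition is an immediate consequence of the definitions together with Corollary \ref{24-09-22a} (finiteness of $\underline{\psi}$) and Lemma \ref{24-09-22bx} (additivity on orthogonal projections), which is precisely what you have spelled out via the universal property of the Grothendieck construction. Your closing remark about torsion is harmless but beside the point, since any semi-group homomorphism into an abelian group factors uniquely through the Grothendieck group regardless.
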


When the weight $\psi$ is a trace we have that $\underline{\psi}(e) = \psi_\infty(e)$ for all $e \in P_\infty(A)$ and the homomorphism $\psi_*$ of Proposition \ref{24-09-22dx} is therefore the usual one, but note that we do not need to assume the trace is lower semi-continuous. When $A$ has an approximate unit consisting of projections there is an identification $K_{00}(A) = K_0(A)$, \cite{B}, and Proposition \ref{24-09-22dx} gives then a pairing of densely defined weights on $A$ with $K_0(A)$. In particular, when $A$ is unital we get a map from the states of $A$ to the states on $K_0(A)$ extending the map from trace states on $A$ which occurs in the Elliott invariant.

\begin{example} Let $H$ be a Hilbert space and $h $ a bounded positive operator on $H$. We can then consider the densely defined weight $\psi$ on the $C^*$-algebra $\mathbb K$ of compact operators on $H$ given by
$$
\psi(a) = \Tr (ha), \ \ \forall a \in \mathbb K^+ .
$$
This is an unbounded weight unless $h$ is of trace class. Using the identification $K_{00}(\mathbb K) = \mathbb Z$ the homomorphism $\psi_*$ of Proposition \ref{24-09-22dx} is given by
$$
\psi_*(z) = \lambda z, \ \ \forall z \in \mathbb Z,
$$
where 
$$
\lambda = \inf \left\{\left< h\phi,\phi\right>: \ \phi \in H, \|\phi\| = 1 \right\} 
$$
is the minimum of the spectrum of $h$.  
\end{example}

\section{Pairing traces with $K_0$}

In the following we fix a densely defined trace $\tau: A^+ \to [0,\infty]$ on $A$.

\begin{lemma}\label{16-11-22} $\mathcal M_\tau$ is a dense two-sided $*$-invariant ideal in $A$.
\end{lemma}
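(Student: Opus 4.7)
The statement makes three assertions about $\mathcal M_\tau$: density in $A$, the two-sided ideal property, and $*$-invariance. I will handle them in increasing order of difficulty. The $*$-invariance is immediate from Lemma~\ref{04-11-21nx}(b), which has already identified $\mathcal M_\tau$ as a $*$-subalgebra of $A$. Density is essentially a reformulation of the hypothesis: $\tau$ being densely defined means $\mathcal M_\tau^+$ is dense in $A^+$, and since every element of $A$ is a complex-linear combination of four elements of $A^+$, this promotes to density of $\mathcal M_\tau = \Span \mathcal M_\tau^+$ in $A$ (this is essentially the same observation used at the start of the proof of Lemma~\ref{29-09-22x}).

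The only substantive point, and the only place the trace axiom enters, is the two-sided ideal property. My plan is to first promote $\mathcal N_\tau$ from a left ideal (Lemma~\ref{04-11-21nx}(a)) to a two-sided $*$-ideal, and then derive the corresponding fact for $\mathcal M_\tau = \Span \mathcal N_\tau^*\mathcal N_\tau$. The trace identity $\tau(aa^*) = \tau(a^*a)$ says exactly that $\mathcal N_\tau$ is self-adjoint, and a self-adjoint left ideal is automatically a right ideal: for $a \in \mathcal N_\tau$ and $c \in A$ one writes $ac = (c^*a^*)^*$ with $c^*a^* \in A \cdot \mathcal N_\tau \subseteq \mathcal N_\tau$, hence $ac \in \mathcal N_\tau$.

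With $\mathcal N_\tau$ now recognized as a two-sided $*$-ideal, the ideal property of $\mathcal M_\tau$ reduces to a short computation on generators $a^*b \in \mathcal N_\tau^*\mathcal N_\tau$. For $c \in A$ the right action is direct: $(a^*b)c = a^*(bc)$ with $bc \in \mathcal N_\tau$, so $(a^*b)c \in \mathcal N_\tau^*\mathcal N_\tau \subseteq \mathcal M_\tau$. For the left action, I would rewrite $c(a^*b) = (ca^*)b = (ac^*)^* b$, use self-adjointness of $\mathcal N_\tau$ to see that $ac^* \in \mathcal N_\tau$, and so again obtain an element of $\mathcal N_\tau^*\mathcal N_\tau$. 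The argument is entirely classical and I do not foresee any real obstacle; the only care needed is to invoke self-adjointness of $\mathcal N_\tau$ whenever a product appears on the ``wrong'' side.
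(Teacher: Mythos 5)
Your proof is correct and follows essentially the same route as the paper's: self-adjointness of $\mathcal N_\tau$ from the trace identity, promotion of the left ideal to a two-sided $*$-ideal, the characterization $\mathcal M_\tau = \Span \mathcal N_\tau^*\mathcal N_\tau$ from Lemma~\ref{04-11-21nx}(b), and density from the densely-defined hypothesis. The only difference is that you spell out the generator computation $(a^*b)c$, $c(a^*b) \in \mathcal N_\tau^*\mathcal N_\tau$, which the paper leaves implicit.
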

\begin{proof} $\mathcal N_\tau$ is a left ideal by (a) of Lemma \ref{04-11-21nx}. The trace property of $\tau$ implies that $\mathcal N_\tau^* = \mathcal N_\tau$ and hence $\mathcal N_\tau$ is also a right ideal. It follows therefore from (b) of Lemma \ref{04-11-21nx} that $\mathcal M_\tau$ is a $*$-invariant two-sided ideal in $A$. It is dense in $A$ because $\mathcal M_\tau^+$ is dense in $A^+$ by assumption. 
\end{proof}

Let $A^\dagger$ be the $C^*$-algebra obtained from $A$ be adjoining a unit to $A$. Thus as a vector space $A^\dagger$ is just $A \oplus \mathbb C$, and the product and involution are given by
$$
(a,\lambda)(b,\mu) = (ab + \lambda b + \mu a, \lambda \mu) 
$$
and
$$
(a,\lambda)^* = (a^*, \overline{\lambda}) .
$$
For every subset $X \subseteq A$, set
$$
X^\dagger := \left\{(a,\lambda) \in A^\dagger: \ a \in X, \ \lambda \in \mathbb C \right\}.
$$
In particular,
$$
\mathcal M_\tau^\dagger := \left\{(a,\lambda) \in A^\dagger: \ a \in \mathcal M_\tau, \ \lambda \in \mathbb C \right\} .
$$
Define $\tau^\dagger : \mathcal M_\tau^\dagger \to \mathbb C$ such that
$$
\tau^\dagger(a,\lambda) := \tau|_{\mathcal M_\tau}(a) .
$$
Since $\tau^\dagger$ is linear on $\mathcal M_\tau^\dagger$ we can consider the tensor product map
$$
\tau^\dagger \otimes \Tr_n : \  M_n(\mathcal M_\tau^\dagger)\to \mathbb C,
$$
where $\Tr_n$ denotes the standard trace on $M_n(\mathbb C)$. Let $P_n(\mathcal M_\tau^\dagger)$ be the set of projections in the $*$-algebra $ M_n(\mathcal M_\tau^\dagger)$ and set
$$
 P_\infty(\mathcal M_\tau^\dagger) :=  \bigcup_n P_n(\mathcal M_\tau^\dagger) .
 $$
We aim to establish the following

\begin{thm}\label{19-11-22d} 
$$K_0(A^\dagger) = \left\{ [e] - [f] : \ e,f \in P_\infty(\mathcal M_\tau^\dagger) \right\}
$$
and there is a homomorphism $\tau^\dagger_* : \ K_0(A^\dagger) \to \mathbb R$ such that
$$
\tau^\dagger_*([e]-[f]) = \tau^\dagger \otimes \Tr_n(e)  - \tau^\dagger \otimes \Tr_n(f)
$$
when $e,f \in M_n(\mathcal M_\tau^\dagger)$.
\end{thm}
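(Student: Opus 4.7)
The proof divides naturally into two parts: representing each class of $K_0(A^\dagger)$ by projections in $P_\infty(\mathcal{M}_\tau^\dagger)$, and then verifying that $\tau^\dagger \otimes \Tr_n$ descends to a homomorphism on those classes. The common technical ingredient is that $\mathcal{M}_\tau^\dagger$ is holomorphically closed in $A^\dagger$.

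For holomorphic closure, write $x = (a,\lambda) \in \mathcal{M}_\tau^\dagger$ with $a \in \mathcal{M}_\tau$. For $f$ holomorphic on a neighbourhood of $\sigma_{A^\dagger}(x) = \{\lambda\} \cup (\lambda + \sigma_A(a))$, the function $g(z) = (f(z)-f(\lambda))/(z-\lambda)$ extends holomorphically across $z = \lambda$, and
\[
f(x) = f(\lambda)\cdot 1 + a\, g(x).
\]
Since $a \in \mathcal{M}_\tau$ and $\mathcal{M}_\tau$ is a two-sided ideal in $A^\dagger$ by Lemma~\ref{16-11-22}, $a\,g(x) \in \mathcal{M}_\tau$ and thus $f(x) \in \mathcal{M}_\tau^\dagger$. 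The same computation applies entrywise, so $M_n(\mathcal{M}_\tau^\dagger)$ is holomorphically closed in $M_n(A^\dagger)$. Given $p \in P_n(A^\dagger)$, choose a self-adjoint $a \in M_n(\mathcal{M}_\tau^\dagger)$ close enough to $p$ that $\sigma(a) \subseteq [0,\delta]\cup[1-\delta,1]$ for some $\delta < 1/2$; the function $\phi$ equal to $0$ on the first interval and $1$ on the second is holomorphic near $\sigma(a)$, so $e := \phi(a)$ is a projection in $M_n(\mathcal{M}_\tau^\dagger)$, and by choosing $a$ close enough to $p$ one gets $\|e-p\| < 1$, whence $e \sim p$ in $M_n(A^\dagger)$ (cf.\ the argument in Lemma~\ref{24-09-22x}). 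Writing $[p] = [e] - [0]$ establishes the first assertion.

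For the pairing, the first step is to verify that $\tau^\dagger \otimes \Tr_n$ is tracial on $M_n(\mathcal{M}_\tau^\dagger)$. Decomposing $x = x_0 + c$ and $y = y_0 + d$ with $x_0, y_0 \in M_n(\mathcal{M}_\tau)$ and $c, d \in M_n(\mathbb{C})$, the commutator $xy - yx$ contributes under $\tau^\dagger \otimes \Tr_n$ only through its $M_n(\mathcal{M}_\tau)$-part. The entry-level identity $\tau(ab) = \tau(ba)$ for $a,b \in \mathcal{M}_\tau$, proved from $\tau(X^*X) = \tau(XX^*)$ on $\mathcal{N}_\tau$ by the standard polarisation and the decomposition $\mathcal{M}_\tau = \Span \mathcal{N}_\tau^* \mathcal{N}_\tau$ of Lemma~\ref{04-11-21nx}(b), handles the $[x_0, y_0]$ piece, while the mixed commutators $[x_0, d]$ and $[c, y_0]$ vanish under $\tau \otimes \Tr_n$ after swapping the two summation indices. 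It then remains to show that if $e,f \in P_N(\mathcal{M}_\tau^\dagger)$ are Murray--von Neumann equivalent in $M_N(A^\dagger)$ via some $v$, then $\tau^\dagger \otimes \Tr(e) = \tau^\dagger \otimes \Tr(f)$; the cleanest route is to use the holomorphic closure of $M_{N'}(\mathcal{M}_\tau^\dagger)$ to lift the equivalence (after padding with zeros) to a partial isometry in $M_{N'}(\mathcal{M}_\tau^\dagger)$, where the tracial property just established applies directly.

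The main obstacle is this last lifting step, which is essentially the injectivity part of the density theorem for $K_0$ of a dense, holomorphically closed subalgebra. A more hands-on alternative is to prove the identity $\tau^\dagger \otimes \Tr_n(v^*v) = \tau^\dagger \otimes \Tr_n(vv^*)$ directly for any $v \in M_n(A^\dagger)$ with $v^*v, vv^* \in M_n(\mathcal{M}_\tau^\dagger)$; after the same block decomposition this reduces to the extended tracial identity $\tau(xy - yx) = 0$ whenever $xy - yx \in \mathcal{M}_\tau$ (without assuming $xy$ or $yx$ separately lie in $\mathcal{M}_\tau$), and this needs genuine care because $\tau$ is not assumed lower semi-continuous and the naive expression $\tau(xy) - \tau(yx)$ can involve $\infty - \infty$.
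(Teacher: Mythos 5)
Your outline correctly separates the two halves of the proof, and your verification that $\tau^\dagger\otimes\Tr_n$ is tracial on $M_n(\mathcal M_\tau^\dagger)$ is essentially the paper's Lemma \ref{17-11-22bx} and Corollary \ref{20-11-22x}. But there is a genuine gap exactly where you flag ``the main obstacle'': you never prove that if $e,f\in P_n(\mathcal M_\tau^\dagger)$ are Murray--von Neumann equivalent via some $v\in M_n(A^\dagger)$ that need not lie in the subalgebra, then $\tau^\dagger\otimes\Tr_n(e)=\tau^\dagger\otimes\Tr_n(f)$. This is the paper's Lemma \ref{19-11-22c}, and it is where almost all of the work sits; neither of your two suggested routes closes it. The ``density theorem'' route fails because $\mathcal M_\tau^\dagger$ carries no complete topology of its own: lifting the equivalence to a partial isometry in $M_{N'}(\mathcal M_\tau^\dagger)$ requires norm limits (a polar decomposition, i.e.\ $(y^*y)^{-1/2}$, and the perturbation of an almost-partial-isometry to a partial isometry), and a norm limit of elements of the dense, non-closed subalgebra $M_{N'}(\mathcal M_\tau^\dagger)$ is only known to lie in $M_{N'}(A^\dagger)$. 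The paper's resolution is to run every limiting argument inside closed $C^*$-subalgebras that happen to be contained in $\mathcal M_\tau$: choose a separable $D\subseteq A$ containing the entries of $e,f,v$, a sequence $\{d_k\}$ as in Lemma \ref{19-11-22fx}, and use the Pedersen-ideal argument of Lemma \ref{18-11-22gx} to see that each $D_{d_k}$ is a genuine $C^*$-algebra inside $\mathcal M_\tau$; the projections are first conjugated into some $M_n(D_{d_N}^\dagger)$ by invertibles in $M_n(\mathcal M_\tau^\dagger)$ (Lemma \ref{04-11-22ax} together with the trace property), a partial isometry between the new projections is produced by polar decomposition inside the $C^*$-algebra $M_n(D^\dagger)$, and that partial isometry is then perturbed into $M_n(D_{d_k}^\dagger)\subseteq M_n(\mathcal M_\tau^\dagger)$, where Corollary \ref{20-11-22x} applies. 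Your ``hands-on alternative'' is not a small repair either: $\tau^\dagger$ is not even positive on $(\mathcal M_\tau^\dagger)^+$ (for a projection $p\in\mathcal M_\tau$ one has $1-p\geq 0$ but $\tau^\dagger(1-p)=-\tau(p)$), so the extended tracial identity you would need cannot be reached by positivity or monotone-limit arguments.

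A secondary problem is the holomorphic-closure claim itself. The factorisation $f(x)=f(\lambda)1+a\,g(x)$ is correct for $n=1$ because the scalar part $\lambda$ is central, but it does not ``apply entrywise'': functional calculus is not an entrywise operation, and for $x=x_0+c\in M_n(\mathcal M_\tau)+M_n(\mathbb C)$ with $c$ a non-central scalar matrix there is no identity of the form $f(x)-f(c)=(x-c)g(x)$. The Cauchy-integral substitute only exhibits $f(x)-f(c)$ as a norm limit of elements of the ideal $M_n(\mathcal M_\tau)$, i.e.\ as an element of $M_n(A)$, which is no information since $\mathcal M_\tau$ is dense. This half is repairable --- for instance, take the self-adjoint approximant $a$ to have scalar part equal to the scalar part of $p$ (a projection in $M_n(\mathbb C)$) and write $\phi(a)=a+a(1-a)h(a)$, noting $a(1-a)\in M_n(\mathcal M_\tau)$ --- but the paper instead proves Lemma \ref{20-11-22a} by again landing the nearby projection in a closed subalgebra $M_n(D_{d_k}^\dagger)$ of $M_n(\mathcal M_\tau^\dagger)$, avoiding functional calculus in the non-closed algebra altogether.
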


\subsection{Proof of Theorem \ref{19-11-22d}} The proof uses the following series of lemmas.

Recall that a \emph{strictly positive} element of a $C^*$-algebra $A$ is a positive element $a \in A^+$ such $\omega(a) > 0$ for all non-zero positive functionals $\omega$ on $A$. A $C^*$-algebra is \emph{$\sigma$-unital} when it contains a strictly positive element. The first two lemmas are well-known. Their proofs are included for completeness.

\begin{lemma}\label{02-01-23x} A separable $C^*$-algebra is $\sigma$-unital.
\end{lemma}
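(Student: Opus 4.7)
The goal is to produce an explicit strictly positive element from the separability hypothesis. The plan is to fix a countable dense sequence $\{a_n\}$ in the unit ball of $A^+$ (which exists since $A$ is separable and $A^+$ with its unit ball inherits separability), and then build
\[
h := \sum_{n=1}^\infty 2^{-n} a_n .
\]
The series is absolutely summable in norm, so it converges to an element $h \in A$, and since each partial sum lies in $A^+$ and $A^+$ is norm closed, $h \in A^+$.

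Next I would verify the defining property of strict positivity. Let $\omega$ be a positive linear functional on $A$, and suppose $\omega(h) = 0$. Since $\omega$ is continuous and positive,
\[
0 = \omega(h) = \sum_{n=1}^\infty 2^{-n} \omega(a_n),
\]
a convergent sum of non-negative terms; hence $\omega(a_n) = 0$ for every $n$. By continuity of $\omega$ and density of $\{a_n\}$ in the unit ball of $A^+$, it follows that $\omega$ vanishes on the unit ball of $A^+$, and then by positive homogeneity on all of $A^+$. Any positive linear functional that vanishes on $A^+$ is zero, so $\omega = 0$. This is the definition of $h$ being strictly positive.

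This proof is entirely routine; there is no real obstacle beyond bookkeeping the norm estimate that ensures convergence of the series and remembering that a countable dense subset of $A$ can be replaced by a countable dense subset of the (separable) unit ball of $A^+$, for example by taking $\{(b_n^* b_n)/(1 + \|b_n\|^2)\}$ for a countable dense sequence $\{b_n\}$ in $A$ and adjoining finitely many scaled variants, or simply by invoking separability of $A^+$ directly as a metric subspace.
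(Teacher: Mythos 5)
Your construction is exactly the one in the paper: a dense sequence $\{a_n\}$ in the positive part of the unit ball and the element $\sum_{n=1}^\infty 2^{-n}a_n$. The paper simply asserts strict positivity, whereas you spell out the (correct) verification that a positive functional vanishing on this element vanishes on every $a_n$, hence on $A^+$, hence on $A$; this is the standard argument and there is nothing to add.
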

\begin{proof} When $A$ is a separable $C^*$-algebra there is a dense sequence $\{a_n\}_{n=1}^\infty$ in $\{a \in D^+: \ 0 \leq a \leq 1 \}$. Then
$$
a := \sum_{n=1}^\infty 2^{-n}a_n
$$
is strictly positive.
\end{proof}

\begin{lemma}\label{19-11-22fx} Let $D$ be a $\sigma$-unital $C^*$-algebra. There is a sequence $\{d_n\}_{n=1}^\infty$ in $D$ such that
\begin{itemize}
\item $0 \leq d_n \leq 1, \ \ \forall n$,
\item $d_nd_{n+1} = d_n, \ \ \forall n$, and
\item $\lim_{n \to \infty} d_na =a, \ \ \forall a \in D$.
\end{itemize}
\end{lemma}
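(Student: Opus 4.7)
The plan is to realise $\{d_n\}$ via continuous functional calculus applied to a strictly positive element of $D$. After rescaling, fix $h\in D$ strictly positive with $0\le h\le 1$. I would first design a telescoping sequence of continuous functions $f_n:[0,1]\to[0,1]$; a convenient choice is piecewise linear with $f_n(t)=0$ on $[0,1/(2n)]$, $f_n(t)=1$ on $[1/(2n-1),1]$, and linear on $[1/(2n),1/(2n-1)]$. Then $\operatorname{supp}(f_n)\subseteq \{f_{n+1}=1\}$, which gives $f_n f_{n+1}=f_n$ as functions on $[0,1]$.

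Setting $d_n:=f_n(h)$, the first two properties are immediate from the functional calculus: $0\le d_n\le 1$ because $0\le f_n\le 1$, and
\[
d_n d_{n+1}=f_n(h)f_{n+1}(h)=(f_n f_{n+1})(h)=f_n(h)=d_n.
\]

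For the approximate–unit property $d_n a\to a$, the plan is to combine a functional–calculus estimate with the standard equivalence, for $h\in D^+$, between strict positivity and $\{h^{1/k}\}_{k\ge 1}$ being an approximate unit for $D$ (see e.g.\ \cite{Pe}). Granted this equivalence, a functional-calculus computation gives
\[
\bigl\|(1-f_n)(h)\,h^{1/k}\bigr\|=\sup_{t\in[0,1]}(1-f_n(t))\,t^{1/k}\;\le\;\bigl(1/(2n-1)\bigr)^{1/k}\xrightarrow[n\to\infty]{}0,
\]
so for every fixed $k$ and every $a\in D$, $d_n(h^{1/k}a)\to h^{1/k}a$ as $n\to\infty$. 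A three-$\varepsilon$ argument, using $\|d_n\|\le 1$ together with $h^{1/k}a\to a$ as $k\to\infty$, then upgrades this to $d_n a\to a$ for all $a\in D$.

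The main obstacle is really only the invocation of the fact that $\{h^{1/k}\}$ forms an approximate unit whenever $h$ is strictly positive; this is a classical result of Aarnes--Kadison / Pedersen, and I would quote it rather than reprove it. Once that is in hand the rest is bookkeeping with the functional calculus.
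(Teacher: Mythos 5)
Your construction is the same as the paper's: both take a strictly positive element and apply a telescoping family of piecewise-linear bump functions $f_n$ vanishing near $0$, so that the first two bullets drop out of the identity $f_nf_{n+1}=f_n$ in $C(\sigma(h))$. The two arguments part ways only at the approximate-unit property. You reduce it to the classical Aarnes--Kadison/Pedersen fact that $\{h^{1/k}\}_k$ is an approximate unit whenever $h$ is strictly positive, and then transfer this to $\{d_n\}$ via the estimate $\|(1-f_n)(h)h^{1/k}\|\leq (2n-1)^{-1/k}$ and a three-$\varepsilon$ argument; this is correct (the supremum over $[0,1]$ dominates the supremum over $\sigma(h)$, which is all you need), and it is the efficient route if one is willing to quote that theorem. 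The paper instead keeps the lemma self-contained: it argues by contradiction that if $(1-d_n)x\not\to 0$ then the functionals $a\mapsto\omega_k\bigl((1-d_{n_k})a(1-d_{n_k})\bigr)$ have a weak$^*$ condensation point $\mu$ which is a non-zero positive functional, yet $\mu(a_0)=0$ because $(1-f_n(t))t\to 0$ uniformly on $\sigma(a_0)$ --- contradicting strict positivity. In effect the paper reproves the special case of the Aarnes--Kadison result that it needs, at the cost of a compactness argument, while you outsource it; both are complete proofs, and your version is shorter provided the reference is accepted as known.
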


\begin{proof} Let $a_0$ be a strictly positive element of $A$. Let $f_n$ be the continuous function $f_n : [0,\infty) \to [0,1]$ such that
$$
f_n(t) = \begin{cases} 0, & \ t \in [0,\frac{1}{n+1}]\\ \text{linear}, & \ t \in \left[\frac{1}{n+1}, \frac{1}{n}\right], \\ 1, & \ t \geq \frac{1}{n} . \end{cases}
$$
Set $d_n := f_n(a_0)$. The first item holds since $0 \leq f_n \leq 1$ and the second because $f_nf_{n+1} = f_n$. To establish the third, assume for a contradiction that there is an element $x \in D$ for which $(1-f_n(a_0))x$ does not converge to $0$. Then 
$$
\sup_{\omega } \omega((1-f_n(a_0))xx^*(1-f_n(a_0))
$$
does not converge to $0$ when we take the supremum over all states $\omega$ of $A$. It follows that there is an $\epsilon > 0$ and a sequence $n_1 < n_2 < n_3 < \cdots$ in $\mathbb N$ such that for each $k$ there is a state $\omega_k$ with
$$
\omega_k((1-f_{n_k}(a_0))xx^*(1-f_{n_k}(a_0))) \geq \epsilon .
$$
Since the unit ball of $A^*$ is weak* compact there is a weak* condensation point $\mu$ of the set of functionals defined by
$$
A\ni a \mapsto  \omega_k((1-f_{n_k}(a_0))a (1-f_{n_k}(a_0))), \  \  k \in \mathbb N.
$$
Then $\mu(xx^*) \geq \epsilon$, and hence $\mu$ is a non-zero positive functional on $A$. However, $\lim_{n \to \infty} (1-f_n(t))t =0$ uniformly for $t$ in the spectrum of $a_0$, and hence $\lim_{n \to \infty} (1-f_n(a_0))a_0 = 0$. It follows that $\mu(a_0) = 0$, contradicting the strict positivity of $a_0$.  
\end{proof}

Consider now a separable $C^*$-subalgebra $D$ of $A$, and let
$$
{\bf d} := \{d_n\}_{n=1}^\infty
$$
be a sequence in $D$ with the properties specified in Lemma \ref{19-11-22fx}. Set
$$
D_{d_n} := \left\{ a \in D : \ ad_n = d_na = a \right\} .
$$
Then $D_{d_n}$ is a $C^*$-subalgebra of $D$ and 
$$
D_{d_n} \subseteq d_nDd_n \subseteq D_{d_{n+1}} \subseteq d_{n+1}Dd_{n+1}.
$$ 
In particular,
$$
\bigcup_{n=1}^\infty d_nDd_n = \bigcup_{n=1}^\infty D_{d_n} .
$$
Set
$$
D({\bf d}) := \bigcup_{n=1}^\infty D_{d_n} .
$$
Then $D({\bf d})$ is a $*$-subalgebra of $D$ and it is dense in $D$ since 
$$
\lim_{n \to \infty} d_nad_n =a
$$ 
for all $a\in D$.

\begin{lemma}\label{18-11-22gx} $D({\bf d})\subseteq \mathcal M_\tau$.
\end{lemma}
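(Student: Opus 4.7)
The plan is to reduce the claim to showing $d_n \in \mathcal M_\tau$ for each $n$, and then to produce a positive element of $\mathcal M_\tau$ that dominates $d_n$, applying a heredity argument. For the reduction: for any $a \in D_{d_n}$, the defining relations $d_n a = a d_n = a$ give $a = d_n a d_n$. Lemma \ref{16-11-22} asserts that $\mathcal M_\tau$ is a two-sided ideal in $A$, so once $d_n \in \mathcal M_\tau$ is known, $a = d_n (a d_n) \in \mathcal M_\tau$. Since $D({\bf d}) = \bigcup_n D_{d_n}$, this handles every element of $D({\bf d})$.

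Next I would record that $\mathcal M_\tau^+$ is hereditary: if $0 \leq a \leq c$ and $\tau(c) < \infty$, then additivity of $\tau$ yields $\tau(c) = \tau(a) + \tau(c-a) \geq \tau(a)$, so $a \in \mathcal M_\tau^+$. Consequently, to establish $d_n \in \mathcal M_\tau^+$, it suffices to find $c \in \mathcal M_\tau^+$ with $d_n \leq c$. The element $c$ is produced by a compression trick that exploits the compact containment $d_n d_{n+1} = d_n$ (and hence, by self-adjointness, also $d_{n+1} d_n = d_n$). Since $\tau$ is densely defined, choose $b \in \mathcal M_\tau^+$ with $\|b - d_{n+1}\| < 1/4$. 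As $b - d_{n+1}$ is self-adjoint, this gives $b \geq d_{n+1} - (1/4) \cdot 1_{A^\dagger}$ in the unitization. Compressing by $d_n^{1/2}$ on both sides and using that $d_n$ and $d_{n+1}$ commute (both lie in $C^*(a_0)$) together with $d_n d_{n+1} = d_n$, one obtains
\[
d_n^{1/2} b d_n^{1/2} \;\geq\; d_n^{1/2} d_{n+1} d_n^{1/2} - \tfrac{1}{4} d_n \;=\; d_n - \tfrac{1}{4} d_n \;=\; \tfrac{3}{4} d_n,
\]
so $d_n \leq \tfrac{4}{3}\, d_n^{1/2} b d_n^{1/2}$. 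The right-hand side lies in $\mathcal M_\tau^+$ by the ideal property of $\mathcal M_\tau$, and heredity concludes that $d_n \in \mathcal M_\tau^+$.

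The main obstacle I expect is resisting the temptation to approximate $d_n$ itself directly by elements of $\mathcal M_\tau^+$, which only delivers norm convergence; since $\mathcal M_\tau$ is not norm-closed in general, such an approach cannot conclude. The crucial idea is to approximate $d_{n+1}$ rather than $d_n$, so that on compression by $d_n^{1/2}$ the error term, which a priori sits on the full unit $1_{A^\dagger}$, is transferred onto $d_n$ itself, where a small numerical factor absorbs it. This is precisely where the Pedersen-style compact containment $d_n d_{n+1} = d_n$ built into the approximate unit $\mathbf d$ is decisive; without it, one could not replace $d_n^{1/2} \cdot 1_{A^\dagger} \cdot d_n^{1/2}$ by $d_n^{1/2} d_{n+1} d_n^{1/2} = d_n$ and complete the estimate.
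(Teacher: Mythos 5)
Your proof is correct and follows essentially the same route as the paper: the heart of both arguments is Pedersen's trick of approximating $d_{n+1}$ (rather than the target element) by some $b \in \mathcal M_\tau^+$ and using $d_nd_{n+1}=d_n$ to absorb the error term under compression, combined with the ideal property from Lemma \ref{16-11-22} and heredity of $\mathcal M_\tau^+$. The only (harmless) difference is organizational: you first establish $d_n \in \mathcal M_\tau$ and then invoke the two-sided ideal property to capture all of $D_{d_n}$, whereas the paper compresses by $\sqrt{d_nad_n}$ directly using $d_{n+1}\sqrt{d_nad_n}=\sqrt{d_nad_n}$.
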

\begin{proof} The key step in the proof comes from the proof of Theorem 1.3 in \cite{Pe} where Pedersen introduces his famous minimal dense ideal. Let $n \in \mathbb N$ and consider an element $a \in D^+$. Set $d := d_nad_n$. It suffices to show that $d \in \mathcal M_\tau$. Since $\tau$ is densely defined there is an $x \in A^+ \cap \mathcal M_\tau$ such that $\left\|x -d_{n+1}\right\| \leq \frac{1}{2}$.  
Using $d_{n+1}\sqrt{d} =\sqrt{d}$ we find that
$$
\frac{1}{2} d \leq \sqrt{d}(1+(x-d_{n+1}))\sqrt{d} = \sqrt{d}(d_{n+1}+(x-d_{n+1}))\sqrt{d} = \sqrt{d}x\sqrt{d}.
$$
Note that $\sqrt{d}x\sqrt{d} \in \mathcal M_\tau$ since $\mathcal M_\tau$ is a two-sided ideal by 
Lemma \ref{16-11-22}. It follows therefore from the estimate above that $\tau(d) \leq 2 \tau(\sqrt{d}x\sqrt{d}) < \infty$. Hence $d \in \mathcal M_\tau$.
\end{proof}

\begin{lemma}\label{17-11-22bx} $\tau^\dagger(xy) = \tau^\dagger(yx)$ for all $x,y \in \mathcal M_\tau^\dagger$.
\end{lemma}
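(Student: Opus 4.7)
The plan is to reduce to showing $\tau(ab) = \tau(ba)$ for $a, b \in \mathcal{M}_\tau$, and to prove that by first establishing the bilinear cyclic identity on $\mathcal{N}_\tau$ via polarization and then using that $\mathcal{N}_\tau$ is a two-sided $*$-ideal once $\tau$ is a trace. For the reduction, take $x = (a,\lambda)$ and $y = (b,\mu)$ in $\mathcal{M}_\tau^\dagger$. A direct calculation from the multiplication rule in $A^\dagger$ gives $xy - yx = (ab - ba,\, 0)$, so by the definition of $\tau^\dagger$ the identity $\tau^\dagger(xy) = \tau^\dagger(yx)$ amounts to $\tau(ab) = \tau(ba)$; both $ab$ and $ba$ lie in $\mathcal{M}_\tau$ by Lemma \ref{16-11-22}, so this is a meaningful equation between complex numbers.

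Next I would prove the bilinear identity $\tau(u^*v) = \tau(vu^*)$ for $u, v \in \mathcal{N}_\tau$. The standard polarization formulas
\[
4u^*v = \sum_{k=0}^{3} i^k (v+i^ku)^*(v+i^ku), \qquad 4vu^* = \sum_{k=0}^3 i^k (v+i^ku)(v+i^ku)^*
\]
reduce this identity to the defining trace equality $\tau(cc^*) = \tau(c^*c)$ applied to $c = v + i^k u$. This $c$ lies in $\mathcal{N}_\tau$, which is a subspace by Lemma \ref{04-11-21nx}(a), so both quantities are finite and the termwise application of $\tau$ is legitimate.

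To extend to $\mathcal{M}_\tau$, write a generic $x \in \mathcal{M}_\tau$ as $\sum_i a_i^* b_i$ with $a_i, b_i \in \mathcal{N}_\tau$, and take any $y \in \mathcal{M}_\tau$ (in fact any $y \in A$ works). By Lemma \ref{16-11-22} the trace property makes $\mathcal{N}_\tau$ a two-sided $*$-ideal, so both $b_i y$ and $a_i y^*$ lie in $\mathcal{N}_\tau$. Two applications of the bilinear identity, first with $u = a_i$, $v = b_i y$, and then with $u = a_i y^*$, $v = b_i$, yield
\[
\tau(a_i^* b_i y) \;=\; \tau(b_i y a_i^*) \;=\; \tau(y a_i^* b_i),
\]
and summing over $i$ gives $\tau(xy) = \tau(yx)$, which combined with the reduction step completes the proof.

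The only delicate point I anticipate is bookkeeping: verifying at each cycling step that the elements genuinely lie in $\mathcal{N}_\tau$ (so that the values are finite and the bilinear identity applies). This is precisely where the two-sidedness of $\mathcal{N}_\tau$ established in Lemma \ref{16-11-22} — equivalently, the trace hypothesis — is essential; without the trace property $\mathcal{N}_\tau$ is only a left ideal and the second cycling step would fail.
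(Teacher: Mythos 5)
Your proof is correct, and its first half — reducing to $\tau(ab)=\tau(ba)$ for $a,b\in\mathcal M_\tau$ via the computation $xy-yx=(ab-ba,0)$, then invoking polarization and the trace identity $\tau(cc^*)=\tau(c^*c)$ — is exactly the paper's argument. Where you diverge is in how the polarization is deployed: the paper applies it directly to $a,b\in\mathcal M_\tau$, writing $ab=\frac14\sum_k i^k(b+i^ka^*)^*(b+i^ka^*)$ and $ba=\frac14\sum_k i^k(b+i^ka^*)(b+i^ka^*)^*$, which is legitimate because $\mathcal M_\tau$ is $*$-invariant and contained in $\mathcal N_\tau$, so each summand lies in $\mathcal M_\tau^+$ and termwise application of $\tau|_{\mathcal M_\tau}$ is justified. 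You instead first establish $\tau(u^*v)=\tau(vu^*)$ on $\mathcal N_\tau\times\mathcal N_\tau$ and then bootstrap through the decomposition $\mathcal M_\tau=\Span\mathcal N_\tau^*\mathcal N_\tau$ and the two-sidedness of $\mathcal N_\tau$, cycling twice. This is a slightly longer route to the lemma as stated, but it buys something the paper's direct polarization does not: the stronger tracial identity $\tau(xy)=\tau(yx)$ for $x\in\mathcal M_\tau$ and \emph{arbitrary} $y\in A$, which is the standard formulation in the weight literature and would be needed if one wanted invariance under conjugation by general unitaries of $A^\dagger$ rather than just elements of $\mathcal M_\tau^\dagger$. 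For the purposes of Lemma \ref{19-11-22c}, where conjugating elements are arranged to lie in $M_n(\mathcal M_\tau^\dagger)$, the paper's shorter version suffices.
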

\begin{proof} Write $x = (a,\lambda), \ y = (b,\mu)$ where $a,b \in \mathcal M_\tau, \ \lambda,\mu \in \mathbb C$. Then $\tau^\dagger(xy) = \tau|_{\mathcal M_\tau}(ab + \lambda b + \mu a)$ while $\tau^\dagger(yx) = \tau|_{\mathcal M_\tau}(ba + \lambda b + \mu a)$. It suffices therefore to show that $\tau|_{\mathcal M_\tau}(ab) = \tau|_{\mathcal M_\tau}(ba)$, which follows from the polarization identities
$$
ab = \frac{1}{4}\sum_{k=1}^4 i^k(b + i^ka^*)^*(b + i^ka^*)
$$
and
$$
ba = \frac{1}{4}\sum_{k=1}^4 i^k(b + i^ka^*)(b + i^ka^*)^* .
$$
\end{proof}

\begin{cor}\label{20-11-22x}$\tau^\dagger \otimes \Tr_n(xy) = \tau^\dagger \otimes \Tr_n(yx)$ for all $x,y \in M_n(\mathcal M_\tau^\dagger)$.
\end{cor}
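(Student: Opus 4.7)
The plan is to reduce the matrix-level identity to the scalar identity of Lemma \ref{17-11-22bx} by expanding both sides in matrix entries. Write $x = (x_{ij})$ and $y = (y_{ij})$ with entries in $\mathcal M_\tau^\dagger$. Then the $i$-th diagonal entry of $xy$ is $\sum_{j=1}^n x_{ij}y_{ji}$, so by definition of $\tau^\dagger \otimes \Tr_n$ as the diagonal sum applied to $\tau^\dagger$,
$$
\tau^\dagger \otimes \Tr_n(xy) \;=\; \sum_{i=1}^n \tau^\dagger\!\left(\sum_{j=1}^n x_{ij}y_{ji}\right) \;=\; \sum_{i,j} \tau^\dagger(x_{ij}y_{ji}),
$$
using the linearity of $\tau^\dagger$ on $\mathcal M_\tau^\dagger$ provided by (d) of Lemma \ref{04-11-21nx}. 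The same computation for $yx$ gives $\tau^\dagger \otimes \Tr_n(yx) = \sum_{i,j} \tau^\dagger(y_{ij}x_{ji})$, and a relabeling of summation indices $(i,j) \leftrightarrow (j,i)$ rewrites this as $\sum_{i,j}\tau^\dagger(y_{ji}x_{ij})$.

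To conclude, I would apply Lemma \ref{17-11-22bx} termwise to get $\tau^\dagger(x_{ij}y_{ji}) = \tau^\dagger(y_{ji}x_{ij})$ for each pair $(i,j)$, which matches the two sums entry by entry. For this termwise application to be legitimate I need each individual product $x_{ij}y_{ji}$ and $y_{ji}x_{ij}$ to lie in the domain $\mathcal M_\tau^\dagger$. This is where Lemma \ref{16-11-22} enters: since $\mathcal M_\tau$ is a two-sided $*$-ideal in $A$, the set $\mathcal M_\tau^\dagger = \mathcal M_\tau + \mathbb C 1$ is a unital $*$-subalgebra of $A^\dagger$, and in particular closed under products; so $x_{ij}y_{ji}, y_{ji}x_{ij} \in \mathcal M_\tau^\dagger$ whenever $x_{ij},y_{ji} \in \mathcal M_\tau^\dagger$.

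There is no real obstacle here, since the work was already done in Lemma \ref{17-11-22bx} via the polarization identities; the only task at this stage is bookkeeping to reduce the matrix-level trace property to the scalar one, together with a check that the domain $M_n(\mathcal M_\tau^\dagger)$ is stable under multiplication so that every term encountered is in the domain of $\tau^\dagger$.
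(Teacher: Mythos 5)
Your proof is correct and is exactly the argument the paper intends: the corollary is stated without proof as an immediate consequence of Lemma \ref{17-11-22bx}, obtained by expanding the diagonal entries of $xy$ and $yx$, using linearity of $\tau^\dagger$, and applying the scalar identity termwise (the domain check via the subalgebra property of $\mathcal M_\tau^\dagger$ being the only point worth recording, as you do).
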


\begin{lemma}\label{04-11-22ax} Let $I$ be a right ideal (not necessarily closed) in the $C^*$-algebra $A$. If $x \in M_n(I^\dagger)$ is invertible in $M_n(A^\dagger)$ then $x^{-1} \in M_n(I^\dagger)$.
\end{lemma}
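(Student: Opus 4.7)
The plan is to exploit the unital quotient $\pi : A^\dagger \to \mathbb{C}$, extended entrywise to a $*$-homomorphism $\pi_n : M_n(A^\dagger) \to M_n(\mathbb{C})$ with kernel $M_n(A)$. Writing each entry $x_{ij} = (a_{ij}, \lambda_{ij})$ with $a_{ij} \in I$ and $\lambda_{ij} \in \mathbb{C}$ yields a decomposition $x = y + \Lambda$, where $y = (a_{ij}) \in M_n(I)$ and $\Lambda = (\lambda_{ij} \cdot 1) \in M_n(\mathbb{C}) \subseteq M_n(A^\dagger)$ is the scalar part; in particular $\pi_n(x) = \Lambda$.

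\textbf{Reduction step.} Since $x$ is invertible in $M_n(A^\dagger)$, applying the homomorphism $\pi_n$ shows that $\Lambda$ is invertible in $M_n(\mathbb{C})$. Factor
\eQ{ x = \Lambda(1 + \Lambda^{-1} y) = \Lambda(1 + z), }
where $z := \Lambda^{-1} y$ lies in $M_n(I)$ because its entries are finite $\mathbb{C}$-linear combinations of elements of $I$ (and $I$ is a subspace). Since $x$ and $\Lambda$ are invertible, so is $1 + z$, and it suffices to show $(1+z)^{-1} - 1 \in M_n(I)$.

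\textbf{Main step.} Set $w := (1+z)^{-1} - 1$, so $(1+z)(1+w) = 1$, equivalently $w = -z(1+w)$. The key observation is that $I$ is a right ideal not only in $A$ but in $A^\dagger$: for $a \in I$ and $(b,\mu) \in A^\dagger$ the product $a(b+\mu) = ab + \mu a$ lies in $I$ since $I$ is a right ideal and a subspace. Consequently $M_n(I)$ is a right ideal in $M_n(A^\dagger)$, so $z(1+w) \in M_n(I)$, hence $w \in M_n(I)$.

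\textbf{Conclusion.} Then $x^{-1} = (1+w)\Lambda^{-1} = \Lambda^{-1} + w\Lambda^{-1}$, where $\Lambda^{-1} \in M_n(\mathbb{C}) \subseteq M_n(I^\dagger)$ and $w\Lambda^{-1} \in M_n(I) \subseteq M_n(I^\dagger)$ (again since $I$ is stable under scalar multiplication). Therefore $x^{-1} \in M_n(I^\dagger)$. The only point requiring real care is the one-sidedness in the identity $w = -z(1+w)$: one must multiply $z$ on the right by $(1+w)$, so that the right-ideal property of $I$ (rather than a two-sided hypothesis) is exactly what is needed; the symmetric identity $w = -(1+w)z$ would require a left ideal instead.
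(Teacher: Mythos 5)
Your proof is correct and takes essentially the same route as the paper's: both isolate the scalar part of $x$ (which must be invertible, by applying the quotient onto $M_n(\mathbb C)$), and both recover the $M_n(I)$-component of $x^{-1}$ from an identity of the form $\bigl(\text{element of } M_n(I)\bigr)\cdot\bigl(\text{element of } M_n(A^\dagger)\bigr)$, which is exactly where the right-ideal hypothesis enters. The paper does this in one line by writing $x=(a,k)$, $x^{-1}=(b,m)$ and solving $kb=-ab-am$ for $b$; your factorization $x=\Lambda(1+z)$ and the identity $w=-z(1+w)$ are a repackaging of the same computation.
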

\begin{proof} Write $x = (a,k)$ and $x^{-1} = (b,m)$ where $a \in M_n(I)$, $b \in M_n(A)$ and $k,m \in M_n(\mathbb C)$. Since $xx^{-1} = 1$ we get the equations $ab + kb+am =0 $ and $km =1$, implying that $b = k^{-1}(-am-ab) \in M_n(I)$.
\end{proof}

\begin{lemma}\label{19-11-22c} Let $e,f$ be projections in $M_n(\mathcal M_\tau^\dagger)$, and assume that there is a partial isometry $v \in M_n(A^\dagger)$ such that $e = vv^*$ and $f= v^*v$. Then $\tau^\dagger \otimes \Tr_n(e) = \tau^\dagger \otimes \Tr_n(f)$.
\end{lemma}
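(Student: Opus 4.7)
The claim is equivalent to showing $\tau^\dagger \otimes \Tr_n(vv^* - v^*v) = 0$, since $vv^* - v^*v = e - f \in M_n(\mathcal M_\tau^\dagger)$. My plan is to reduce this difference to a single matrix commutator in $M_n(\mathcal M_\tau)$ and then kill it by a trace argument.

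Write each entry $v_{ij} = b_{ij} + \lambda_{ij}$ with $b_{ij} \in A$ and $\lambda_{ij} \in \mathbb C$, and set $B = (b_{ij}) \in M_n(A)$, $L = (\lambda_{ij}) \in M_n(\mathbb C)$. Unpacking $(vv^*)_{ii}$ and $(v^*v)_{ii}$ in terms of these, the cross terms involving exactly one factor from $B$ and one scalar factor coincide after taking $\Tr_n$, because scalars commute with every element of $A$. Thus in the difference the scalar/linear contributions cancel, and
\[
\Tr_n(vv^*)_A - \Tr_n(v^*v)_A \;=\; \Tr_n(BB^*) - \Tr_n(B^*B) \;=\; -\Tr_n([B^*, B]),
\]
which lies in $\mathcal M_\tau$ (since $\Tr_n((vv^*)_A)$ and $\Tr_n((v^*v)_A)$ do, by the assumption $e,f\in M_n(\mathcal M_\tau^\dagger)$). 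The problem is therefore reduced to showing $\tau(\Tr_n([B^*,B])) = 0$.

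Next I would use the trace property on $A$ at the level of $[0,\infty]$: for every element $c \in A$ one has $\tau(c^*c) = \tau(cc^*)$ as elements of $[0,\infty]$. Summing this over $i,j$ gives the matrix identity $\tau \otimes \Tr_n(B^*B) = \tau \otimes \Tr_n(BB^*)$ in $[0,\infty]$. If both are finite, the desired vanishing follows immediately. To handle the (possibly) infinite case, let $D \subseteq A$ be a separable $C^*$-subalgebra containing the entries of $B$ and of the $A$-parts of $e, f$, and let $\{d_m\}\subseteq \mathcal M_\tau \cap D$ be as in Lemma~\ref{19-11-22fx} (so $d_m d_{m+1} = d_m$ and $d_m a \to a$ for all $a \in D$, and $d_m \in \mathcal M_\tau$ by Lemma~\ref{18-11-22gx}). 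Setting $D_m := d_m \otimes 1_n$, the truncated commutator $\Tr_n([(D_m B D_m)^*, D_m B D_m])$ satisfies $\tau = 0$ because $D_m B D_m \in M_n(\mathcal M_\tau)$ has entries in $\mathcal N_\tau$. Using Corollary~\ref{20-11-22x} (cyclicity of $\tau^\dagger \otimes \Tr_n$ on $M_n(\mathcal M_\tau^\dagger)$) to pull $D_m^2$ out, one rewrites this as $\tau(\Tr_n(D_m^2[B^*,B] D_m^2)) = 0$, or equivalently (again by cyclicity) $\tau(\Tr_n([B^*,B]) D_m^4) = 0$ for every $m$.

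The final step is passing from $\tau(\Tr_n([B^*,B])\, D_m^4) = 0$ to $\tau(\Tr_n([B^*,B])) = 0$. Since $\Tr_n([B^*,B]) \in \mathcal M_\tau$, one applies cyclicity once more to write $\tau(\Tr_n([B^*,B])\, D_m^4) = \tau(D_m^2 \Tr_n([B^*,B]) D_m^2)$ and uses that $D_m^2 x D_m^2 \to x$ in $\mathcal M_\tau$-norm in the precise sense that $\tau(D_m^2 x D_m^2) \to \tau(x)$ for $x \in \mathcal M_\tau$, which in turn follows from cyclicity combined with the fact that $\Tr_n([B^*,B])$ may be written as a linear combination of positive elements of $\mathcal M_\tau$ (by Lemma~\ref{04-11-21nx}(d)).

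The main obstacle will be that last continuity assertion: the trace $\tau$ is only densely defined, not lower semi-continuous, so the natural approach via monotone convergence fails. The way around, which is implicit in Connes's argument and Elliott's reworking, is to never leave $\mathcal M_\tau$: because $\tau\lvert_{\mathcal M_\tau}$ is genuinely linear and the cyclicity of Lemma~\ref{17-11-22bx} holds on $\mathcal M_\tau^\dagger$, every manipulation above can be carried out as an equality of finite complex numbers, so the limit reduces to a purely algebraic statement rather than one requiring semi-continuity.
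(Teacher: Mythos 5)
Your opening reduction is fine: writing $v = B + L$ with $B \in M_n(A)$ and $L \in M_n(\mathbb C)$, the scalar and cross terms cancel exactly (as elements of $A$, not just under $\tau$), so the claim becomes $\tau|_{\mathcal M_\tau}\bigl(\Tr_n(BB^*) - \Tr_n(B^*B)\bigr) = 0$, where only the difference, not the individual summands, is known to lie in $\mathcal M_\tau$; and when $\tau(\Tr_n(BB^*)) < \infty$ the weight's additivity plus the trace property finishes the job. The problems are all in the infinite case, and as written they are fatal. First, the truncation step does not work algebraically: cyclic permutations of $\tau\otimes\Tr_n\bigl(D_mB^*D_m^2BD_m\bigr)$ always leave a copy of $D_m^2$ wedged between $B^*$ and $B$ --- you may rotate factors around the trace, but you cannot commute $D_m^2$ past $B$ --- so you never reach $\tau\bigl(\Tr_n([B^*,B])D_m^4\bigr)$. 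What cyclicity actually yields is $\tau\otimes\Tr_n(D_m^2B^*D_m^2B) = \tau\otimes\Tr_n(D_m^2BD_m^2B^*)$, a true but empty identity. Second, and more fundamentally, the limit you need at the end, $\tau(D_m^2xD_m^2) \to \tau(x)$ for $x \in \mathcal M_\tau$, is precisely a lower semi-continuity statement and is false for the traces this paper is about: essentially by the construction in Proposition \ref{02-01-23} one has $\lim_m \tau(d_m^2xd_m^2) = \tilde{\tau}(x)$, and for a Dixmier trace $\tilde{\tau} = 0$ while $\tau \neq 0$ on $\mathcal M_\tau^+$. Your closing remark that the limit ``reduces to a purely algebraic statement because one never leaves $\mathcal M_\tau$'' is not borne out by your own construction: $\Tr_n(BB^*)$ and $\Tr_n(B^*B)$ need not individually lie in $\mathcal M_\tau$, and the truncations converge to $\Tr_n([B,B^*])$ only in norm, which $\tau$ does not respect.

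The paper avoids all of this by never decomposing $v$ and never taking a limit against $\tau$. It conjugates $e$ and $f$ by invertibles $u,s \in M_n(\mathcal M_\tau^\dagger)$ (whose inverses are again in $M_n(\mathcal M_\tau^\dagger)$ by Lemma \ref{04-11-22ax}) onto projections $e',f'$ lying in $M_n(D_{d_N}^\dagger)$, and then --- this is the key point --- manufactures a \emph{new} partial isometry $w'$ implementing $e' \sim f'$ which itself lies in $M_n(D_{d_k}^\dagger) \subseteq M_n(\mathcal M_\tau^\dagger)$ by Lemma \ref{18-11-22gx}. After that, every application of Corollary \ref{20-11-22x} has both factors in $M_n(\mathcal M_\tau^\dagger)$ and everything is an identity of finite complex numbers. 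To salvage your approach you would have to replace the truncate-and-pass-to-the-limit step by such a replacement of $v$ itself.
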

\begin{proof} The proof is based on material from Section II. 4 of \cite{B}. Choose a separable $C^*$-algebra $D \subseteq A$ such that $e,f,v \in M_n(D^\dagger)$ and choose in $D$ a sequence $\{d_n\}_{n=1}^\infty$ with the properties specified in Lemma \ref{19-11-22fx}. For each $\epsilon >0$ we can then find $N \in \mathbb N$ and projections $e',f' \in M_n(D_{d_N}^\dagger)$ such that $\left\| e-e'\right\| \leq \epsilon$ and $\|f-f'\| \leq \epsilon$, cf. Lemma  6.3.1 of \cite{RLL}. Set $u := (2e'-1)(2e-1) + 1$. Note that
\begin{align*}
&\left\|1- \frac{1}{2}u\right\| = \left\|(2e'-1)(e'-e)\right\| \leq \|e'-e\|,  
\end{align*} 
implying that $u$ is invertible if $\epsilon < 1$. Since $ue = 2e'e = e'u$, we see that $ueu^{-1} = e'$. Note that $u \in M_n(\mathcal M_\tau^\dagger)$. It follows therefore from Lemma \ref{04-11-22ax} that $u^{-1} \in M_n(\mathcal M_\tau^\dagger)$. Since $e' \in M_n(\mathcal M_\tau^\dagger)$ by Lemma \ref{18-11-22gx} it follows now from Corollary \ref{20-11-22x} that $\tau^\dagger \otimes \Tr_n(e') = \tau^\dagger \otimes \Tr_n(e)$. We observe that $u\in M_n(D^\dagger)$, which implies that $u \in M_n((\mathcal M_\tau \cap D)^\dagger)$ and hence that $u^{-1} \in M_n((\mathcal M_\tau \cap D)^\dagger)$ by Lemma \ref{04-11-22ax}. Similarly, $\tau^\dagger \otimes \Tr_n(f') = \tau^\dagger \otimes \Tr_n(f)$ and $sfs^{-1} = f'$ for some invertible element $s \in M_n((\mathcal M_\tau \cap D)^\dagger)$ with $s^{-1} \in M_n((\mathcal M_\tau \cap D)^\dagger)$. We aim to show that $\tau^\dagger \otimes \Tr_n(e') = \tau^\dagger \otimes \Tr_n(f')$. Set $x:= f'sv^*u^{-1}e', \ y := e'uvs^{-1}f'$. Then $x,y \in M_n(D^\dagger)$ and
\begin{align*}
&xy = f'sv^*u^{-1}e'uvs^{-1}f' = f'sv^*evs^{-1}f' \\
&=  f'sv^*vv^*vs^{-1}f'=f'sfs^{-1}f' =f'.
\end{align*}
Similarly, $yx =e'$. Since $f' = {f'}^*f' = y^*x^*xy \leq \|x\|^2 y^*y$ it follows that $y^*y$ is positive and invertible in $f'M_n(D^\dagger)f'$. Taking the inverse in that algebra, set 
$$
w:= y(y^*y)^{-1/2} .
$$ 
Then $w^*w = (y^*y)^{-1/2}y^*y(y^*y)^{-1/2} = f'$ and hence, in particular, $ww^*$ is a projection. Note that
\begin{align*}
&e' = yxx^*y^* \leq \|x\|^2 yy^* = \|x\|
^2 y(y^*y)^{-1/2}y^*y(y^*y)^{-1/2}y^* \\
&\leq \|x\|^2\|y\|^2  y(y^*y)^{-1/2}(y^*y)^{-1/2}y^* = \|x\|^2\|y\|^2 ww^* .
\end{align*}
Then 
$$
0 \leq (1-ww^*)e'(1-ww^*) \leq \|x\|^2\|y\|^2(1-ww^*)ww^*(1-ww^*) = 0,
$$ 
implying that $e' = ww^*e'$. On the other hand, since $e'y = y$, it follows that $e'ww^* = ww^*$ and hence $e' = ww^*$. Since $w \in M_n(D^\dagger)$ and $D({\bf d})$ is dense in $D$ there is a $k > N$ and an element $z \in M_n(D_{d_k}^\dagger)$ such that $\left\|z{z}^* - e'\right\| < \frac{1}{2}$ and $\left\|{z}^*z - f'\right\| < \frac{1}{2}$. Since $e',f',z \in M_n(D_{d_k}^\dagger)$ it follows then from Lemma 6.3.1 in \cite{RLL} that there is a partial isometry $w' \in M_n(D_{d_k}^\dagger)$ such that $w'{w'}^* = e'$ and ${w'}^*w' = f'$. Since $w' \in M_n(\mathcal M_\tau^\dagger)$ by Lemma \ref{18-11-22gx} it follows from Corollary \ref{20-11-22x} that $ \tau^\dagger \otimes \Tr_n(e') = \tau^\dagger \otimes \Tr_n(f')$. 
\end{proof}

\begin{lemma}\label{20-11-22a} Let $e \in M_n(A^\dagger)$ be a projection. There is a projection $e' \in M_n(\mathcal M_\tau^\dagger)$ and a partial isometry $v \in M_n(A^\dagger)$ such that $vv^*= e$ and $v^*v= e'$.
\end{lemma}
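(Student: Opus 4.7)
The plan is to reuse the approximation machinery developed in the first half of the proof of Lemma \ref{19-11-22c}. The point is that Lemma \ref{19-11-22c} itself does not assume anything about the given projections beyond their lying in $M_n(\mathcal M_\tau^\dagger)$, but its opening step—approximating a projection of $M_n(A^\dagger)$ by a projection of $M_n(D_{d_N}^\dagger)\subseteq M_n(\mathcal M_\tau^\dagger)$—is exactly what the present lemma asserts. So the strategy is to extract that opening step from Lemma \ref{19-11-22c} and record that it already implies Murray--von Neumann equivalence with a projection in $M_n(\mathcal M_\tau^\dagger)$.

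Concretely, I would first choose a separable $C^*$-subalgebra $D \subseteq A$ such that $e \in M_n(D^\dagger)$; for instance take $D$ to be the $C^*$-algebra generated by the (finitely many) $M_n(A)$-entries of $e$. By Lemma \ref{02-01-23x}, $D$ is $\sigma$-unital, so Lemma \ref{19-11-22fx} provides a sequence ${\bf d} = \{d_k\}_{k=1}^\infty$ in $D$ with the stated properties. Since $\lim_k d_k a d_k = a$ for every $a \in D$, the subalgebra $D({\bf d}) = \bigcup_k D_{d_k}$ is dense in $D$, and hence $\bigcup_k M_n(D_{d_k}^\dagger)$ is dense in $M_n(D^\dagger)$.

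Next, I would invoke Lemma~6.3.1 of \cite{RLL} in the same way as in Lemma \ref{19-11-22c}: for any preassigned $\epsilon \in (0,\tfrac{1}{2})$ there exist $N \in \mathbb N$ and a projection $e' \in M_n(D_{d_N}^\dagger)$ with $\|e - e'\| \leq \epsilon$. Lemma \ref{18-11-22gx} then gives $D_{d_N} \subseteq D({\bf d}) \subseteq \mathcal M_\tau$, so that $e' \in M_n(\mathcal M_\tau^\dagger)$, which is the desired projection. Finally, because $\|e-e'\| < 1$, a standard result (for example Proposition~4.6.6 of \cite{B}) produces a partial isometry $v \in M_n(A^\dagger)$ with $vv^* = e$ and $v^*v = e'$.

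I do not expect any substantive obstacle here: the whole point is that the lemmas about $\sigma$-unitality, the approximate unit ${\bf d}$, and the containment $D({\bf d})\subseteq \mathcal M_\tau$ were set up precisely so that the opening approximation step works uniformly for any projection in $M_n(A^\dagger)$. The only item requiring a little care is to ensure the approximation is close enough ($\epsilon<1$) to activate the Murray--von Neumann equivalence via \cite{B}; this is automatic as long as $\epsilon$ is chosen at the outset.
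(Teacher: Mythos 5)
Your proposal is correct and follows essentially the same route as the paper: choose a separable $D$ containing the entries of $e$, take the sequence $\{d_k\}$ from Lemma \ref{19-11-22fx}, use the density of $D({\bf d})=\bigcup_k D_{d_k}$ together with Lemma 6.3.1 of \cite{RLL} to replace $e$ by a Murray--von Neumann equivalent projection $e'\in M_n(D_{d_k}^\dagger)$, and conclude with Lemma \ref{18-11-22gx} that $e'\in M_n(\mathcal M_\tau^\dagger)$. The only cosmetic difference is that you split the last step into ``approximate by a nearby projection'' plus Proposition 4.6.6 of \cite{B}, whereas the paper extracts both the projection and the partial isometry directly from Lemma 6.3.1 of \cite{RLL}.
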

\begin{proof} Let $D \subseteq A$ be a separable $C^*$-subalgebra such that $e \in M_n(D^\dagger)$ and let $\{d_n\}_{n=1}^\infty$ be a sequence in $D$ with the properties specified in Lemma \ref{19-11-22fx}. Since $D({\bf d})$ is a $*$-algebra which is dense in $D$ and since $M_n(D_{d_k}^\dagger)$ is a $C^*$-algebra it follows from Lemma 6.3.1 in \cite{RLL} that there is a $k$, a projection $e' \in M_n(D_{d_k}^\dagger)$ and a partial isometry $v \in M_n(A^\dagger)$ such that $vv^*= e$ and $v^*v= e'$. This completes the proof because $M_n(D_{d_k}^\dagger) \subseteq M_n(\mathcal M_\tau^\dagger)$ by Lemma \ref{18-11-22gx}. 
\end{proof}

\emph{Proof of Theorem \ref{19-11-22d}:} The identity 
$$
K_0(A^\dagger) = \left\{ [e] - [f] : \ e,f \in P_\infty(\mathcal M_\tau^\dagger) \right\}
$$ follows from the definition of $K_0(A^\dagger)$ and Lemma \ref{20-11-22a}. To prove that $\tau^\dagger_*$ is well-defined assume that $e,f,e',f'$ are projections in $M_n(\mathcal M_\tau^\dagger)$ such that $[e] - [f] = [e']-[f']$ in $K_0(A^\dagger)$. There is then a projection $r \in P_\infty(A^\dagger)$ such that $e\oplus f' \oplus r$ and $e'\oplus f \oplus r$ are Murray-von Neumann equivalent in $M_N(A^\dagger)$ for some $N \in \mathbb N$. By Lemma \ref{20-11-22a} we may assume that $r \in P_\infty(\mathcal M_\tau^\dagger)$. It follows then from Lemma \ref{19-11-22c} that 
$$
\tau^\dagger \otimes \Tr_N(e\oplus f' \oplus r) = \tau^\dagger \otimes \Tr_N(e'\oplus f \oplus r)
$$ 
and hence by linearity that
$$
\tau^\dagger\otimes \Tr_n(e) -\tau^\dagger\otimes \Tr_n(f) = \tau^\dagger\otimes \Tr_n(e') -\tau^\dagger\otimes \Tr_n(f').
$$

\qed

\bigskip

By definition $K_0(A)$ is a subgroup of $K_0(A^\dagger)$; the kernel of map $K_0(A^\dagger) \to K_0(\mathbb C)$. The map $\tau^\dagger_*$ of Theorem \ref{19-11-22d} gives therefore rise, by restriction, to a homomorphism 
$$
\tau_* : K_0(A) \to \mathbb R
$$
such that
$$
\tau_*([e]-[f]) = \tau^\dagger \otimes \Tr_n(e) - \tau^\dagger \otimes \Tr_n(f)
$$
when $e,f \in M_n(\mathcal M_\tau^\dagger)$ and $[e]-[f] \in K_0(A)$.

Recall that $K_0(A)$ comes equipped with an order given by the semi-group
$$
K_0(A)_+ := \left\{[e] : \ e \in P_\infty(A) \right\} ,
$$
cf. III 6 in \cite{B}.

\begin{lemma}\label{20-11-22hx} Let $e$ be a projection in $M_k(A)$. Then $e \in M_k(\mathcal M_\tau)$.
\end{lemma}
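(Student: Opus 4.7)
The plan is to reduce the problem to an invertibility argument inside the unital corner $eM_k(A)e$, exploiting that the \emph{trace} property (not available for a general weight) makes $\mathcal M_\tau$ a dense two-sided ideal via Lemma \ref{16-11-22}. Entrywise, $M_k(\mathcal M_\tau)$ is then a dense two-sided $*$-invariant ideal in $M_k(A)$: density follows because matrix multiplication and addition act entrywise on a dense subset, and the ideal property follows from the fact that each matrix entry of a product $xy$ with $x\in M_k(A)$, $y\in M_k(\mathcal M_\tau)$ is a finite sum of products $x_{ij}y_{jl}\in\mathcal M_\tau$.

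With that in hand, I would carry out the following steps in order. First, pick $a\in M_k(\mathcal M_\tau)$ with $\|a-e\|<1$, using density. Second, set $b:=eae\in eM_k(A)e$; since $M_k(\mathcal M_\tau)$ is a two-sided ideal in $M_k(A)$ and $e\in M_k(A)$, we have $b\in M_k(\mathcal M_\tau)$. Third, note that $\|b-e\|=\|e(a-e)e\|\le\|a-e\|<1$, so in the unital $C^*$-algebra $eM_k(A)e$ (with unit $e$) the element $b$ is invertible; let $c\in eM_k(A)e\subseteq M_k(A)$ denote its inverse, so that $bc=e$. Fourth, conclude that $e=bc$ is a product of an element of $M_k(\mathcal M_\tau)$ with an element of $M_k(A)$, so $e\in M_k(\mathcal M_\tau)$ by the ideal property once more.

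The main (and essentially only) obstacle is the first bullet point: one must be sure that $\mathcal M_\tau$ really is a \emph{two-sided} ideal, which is exactly where the trace hypothesis enters through $\mathcal N_\tau^{\,*}=\mathcal N_\tau$ in Lemma \ref{16-11-22}. For a general densely defined weight $\psi$ the analogue of this lemma fails — $\mathcal M_\psi$ is only a $*$-subalgebra — and the argument breaks at the last step, since one could not guarantee $bc\in M_k(\mathcal M_\psi)$. This matches the intuition that arbitrary weights need not assign finite value to projections (cf.\ the example of $\psi(a)=\Tr(ha)$ with $h$ unbounded), while traces do via the present ideal-theoretic mechanism.
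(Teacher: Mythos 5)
Your proof is correct, but it finishes differently from the paper's. Both arguments begin the same way: the trace hypothesis enters through Lemma \ref{16-11-22} exactly as you describe, making $M_k(\mathcal M_\tau)=\mathcal M_{\tau_k}$ a dense two-sided ideal of $M_k(A)$, and both then approximate $e$ by an element of this ideal and compress by $e$. From there the paper argues by positivity rather than invertibility: it chooses $x\in\mathcal M_{\tau_k}^+$ with $\|x-e\|<\frac12$, observes that $exe=e\left(1+(x-e)\right)e\ge\frac12 e$ while $exe\in\mathcal M_{\tau_k}$ by the ideal property, and concludes $\tau_k(e)\le 2\tau_k(exe)<\infty$ from monotonicity of the weight. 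Your route instead inverts $b=eae$ in the unital corner $eM_k(A)e$ (where $\|b-e\|<1$ suffices and $a$ need not be positive or self-adjoint) and writes $e=bc$ to land back in the ideal. Your version is marginally more general and purely algebraic: it shows that any projection in the closure of a dense two-sided ideal of a $C^*$-algebra already belongs to that ideal, with no reference to $\tau$ at all, whereas the paper's version stays inside the weight formalism and yields the quantitative bound $\tau_k(e)\le 2\tau_k(exe)$ as a byproduct. One small caveat on your closing aside: the example $\psi(a)=\Tr(ha)$ with $h$ bounded positive does not actually exhibit a projection of infinite weight, since every projection in $\mathbb K$ has finite rank and hence finite value under that $\psi$; the remark is not load-bearing for your proof, but the cited example does not support it.
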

\begin{proof} Since $\tau_k$ is densely defined by Lemma \ref{29-09-22x} there is an $x \in \mathcal M_{\tau_k}^+$ such that $\left\|e -x \right\| < \frac{1}{2}$. Then
$$
\frac{1}{2} e \leq e\left(1 + (x-e)\right)e = exe \in \mathcal M_{\tau_k} .
$$
Hence $\tau_k(e) \leq 2 \tau_k(exe) < \infty$, showing that $e \in \mathcal M_{\tau_k}$. This completes the proof because $\mathcal M_{\tau_k} = M_k(\mathcal M_\tau)$ by (f) of Lemma \ref{26-09-22x}.

\end{proof}

Since $\tau^\dagger\otimes \Tr_n = \tau_n$ on $M_n(\mathcal M_\tau)$ it follows from Lemma \ref{20-11-22hx} and Theorem \ref{19-11-22d} that
\begin{equation}\label{20-11-22e}
\tau_* ([e]-[f]) = \tau_n(e)  - \tau_n(f)
\end{equation}
when $e$ and $f$ are projections in $M_n(A)$. In particular, it follows that $\tau_*$ is a positive homomorphism in the sense that
$$
\tau_*(K_0(A)_+) \subseteq [0,\infty) .
$$


\subsection{On lower semi-continuity of traces}

A part of the next proposition is known and follows from Lemma 3.1 in \cite{ERS} and Remark 2.27 (iv) in \cite{BK}.

\begin{prop}\label{02-01-23} Let $\tau$ a densely defined trace on the $C^*$-algebra $A$. There is a densely defined lower semi-continuous trace $\tilde{\tau}$ on $A$ such that 
\begin{enumerate}
\item[(a)] $\tilde{\tau}(a) \leq \tau(a)$ for all $a \in A^+$, and
\item[(b)] if $\phi$ is a lower semi-continuous weight on $A$ such that $\phi(a) \leq \tau(a)$ for all $a \in A^+$, then $\phi(a) \leq \tilde{\tau}(a)$ for all $a \in A^+$.
\end{enumerate}
The trace $\tilde{\tau}$ has the property that $\tau_* = {\tilde{\tau}}_*$ on $K_0(A)$.
\end{prop}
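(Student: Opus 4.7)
My plan is to take the natural candidate
\eQ{\tilde{\tau}(a) := \sup\{\phi(a) : \phi \text{ is a lower semi-continuous weight on } A \text{ with } \phi \leq \tau \text{ on } A^+\}.}
The hard part---that this pointwise supremum is itself a lower semi-continuous weight (in particular, additive)---is the classical regularization result cited in \cite{ERS,BK}. I would then observe that $\tilde{\tau}$ is automatically tracial via unitary invariance: conjugation by a unitary $u$ in the multiplier algebra $M(A)$ preserves the class of l.s.c.\ weights dominated by $\tau$ (because $\tau \circ \Ad u = \tau$), so the supremum satisfies $\tilde{\tau} \circ \Ad u = \tilde{\tau}$. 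Properties (a) and (b) are immediate from the definition, and density of $\tilde{\tau}$ follows from $\mathcal{M}_\tau \subseteq \mathcal{M}_{\tilde{\tau}}$ combined with Lemma \ref{16-11-22}.

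For the identity $\tau_* = \tilde{\tau}_*$ on $K_0(A)$, I would use formula \eqref{20-11-22e} to reduce matters to showing $\tau_n(e) = (\tilde{\tau})_n(e)$ for every projection $e \in M_n(A)$. The engine is a compression argument. Since $e \in M_n(\mathcal{M}_\tau)$ by Lemma \ref{20-11-22hx}, the functional $\omega_e(X) := \tau_n(eXe)$ is well defined on $M_n(A)$. For $X \in M_n(A)^+$, the trace identity together with $e = e^2 \leq 1$ yields
\eQ{\omega_e(X) = \tau_n(eXe) = \tau_n(X^{1/2}eX^{1/2}) \leq \tau_n(X),}
so $\omega_e$ is a bounded positive linear functional on $M_n(A)$ dominated by $\tau_n$; in particular, it is a lower semi-continuous weight with $\omega_e(e) = \tau_n(e)$.

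Next I would apply the existence part of the proposition to $\tau_n$ on $M_n(A)$, obtaining the maximal l.s.c.\ weight $\widetilde{\tau_n} \leq \tau_n$. Property (b) applied to $\tau_n$ gives $\omega_e \leq \widetilde{\tau_n}$, whence $\tau_n(e) = \omega_e(e) \leq \widetilde{\tau_n}(e) \leq \tau_n(e)$, forcing $\widetilde{\tau_n}(e) = \tau_n(e)$. To identify $\widetilde{\tau_n}$ with $(\tilde{\tau})_n$, I would use that $\widetilde{\tau_n}$ is tracial by the same unitary-invariance argument; any tracial l.s.c.\ weight on $M_n(A)$ is the matrix extension of its restriction to the corner $A \otimes e_{11}$, so $\widetilde{\tau_n} = (\phi)_n$ for an l.s.c.\ trace $\phi$ on $A$ with $\phi \leq \tau$. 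By maximality of $\tilde{\tau}$, $\phi \leq \tilde{\tau}$, hence $\widetilde{\tau_n} \leq (\tilde{\tau})_n$; the reverse inequality holds by maximality of $\widetilde{\tau_n}$. Thus $(\tilde{\tau})_n(e) = \tau_n(e)$, and $\tau_* = \tilde{\tau}_*$ follows.

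The main obstacle I foresee is the very first step: proving additivity of $\tilde{\tau}$, which is precisely the classical lower semi-continuous regularization result of \cite{ERS,BK}. A secondary but nontrivial point is the identification $\widetilde{\tau_n} = (\tilde{\tau})_n$ at the matrix level, which rests on the tracial character of the maximal lower semi-continuous regularization and the corner-restriction structure of traces on $M_n(A)$.
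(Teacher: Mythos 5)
Your argument for $\tau_*=\tilde{\tau}_*$ has a genuine gap. Formula \eqref{20-11-22e} computes $\tau_*$ only on classes $[e]-[f]$ with $e,f\in P_\infty(A)$, i.e.\ on the image of the canonical map $K_{00}(A)\to K_0(A)$, and that image is in general a proper subgroup of $K_0(A)$: for $A=C_0(\mathbb R^2)$ it is zero while $K_0(A)\cong\mathbb Z$, generated by $[e]-[f]$ with $e$ a Bott projection in $M_2(A^\dagger)$ and $f$ scalar. So reducing to the identity $\tau_n(e)=(\tilde{\tau})_n(e)$ for projections $e\in M_n(A)$ proves the claim only on that subgroup. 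To cover all of $K_0(A)$ you must treat projections $e\in M_n(\mathcal M_\tau^\dagger)$, and there your compression trick fails at the outset: $\tau^\dagger\otimes\Tr_n$ annihilates the scalar component and is therefore not positive on $M_n(A^\dagger)^+$, so $X\mapsto\tau^\dagger\otimes\Tr_n(eXe)$ is not a positive functional dominated by anything useful. The paper's route is to exchange $e$ for an equivalent projection $e'\in M_n(D_k^\dagger)$ with $D_k=d_kAd_k$ (using the first half of the proof of Lemma \ref{19-11-22c}) and to observe that $\tau$ and $\tilde{\tau}$ coincide on $d_kAd_k$. That reduction is the missing step; the $K_{00}$-level part of your argument --- the functional $\omega_e$, maximality, and the identification of $\widetilde{\tau_n}$ with $(\tilde{\tau})_n$ via corners --- is sound as far as it goes, and your compression functional $\tau(d_{k+1}\,\cdot\,d_{k+1})$ combined with (b) would in fact give the needed coincidence of $\tau$ and $\tilde{\tau}$ on $d_kAd_k$ once the reduction to $e'$ is in place.

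On the existence of $\tilde{\tau}$ your route is genuinely different from the paper's: you take the pointwise supremum of all lower semi-continuous weights dominated by $\tau$ and outsource additivity and lower semi-continuity to \cite{ERS} and \cite{BK}, whereas the paper constructs $\tilde{\tau}(a)=\lim_{n\to\infty}\tau(d_nad_n)$ explicitly for an approximate unit with $d_nd_{n+1}=d_n$ (first for separable $A$, then patched over separable subalgebras), verifying (a), (b), traciality and lower semi-continuity by hand, with Lemma \ref{18-11-22gx} supplying the boundedness of each $\tau(d_n\,\cdot\,d_n)$. Since the paper concedes that this portion is known and follows from those references, citing them is defensible; note, however, that your traciality step (unitary invariance of the supremum implies the trace identity for a lower semi-continuous weight) is itself a nontrivial classical fact that you assert rather than prove, and it becomes unnecessary in the paper's approach, where the trace property of $\tilde{\tau}$ drops out of a direct limit computation.
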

\begin{proof}  Assume first that $A$ is separable. Let $\{d_n\}$ be a sequence in $A$ with the properties specified in Lemma \ref{19-11-22fx}. Then $d_n^2 = d_{n+1}d_n^2d_{n+1} \leq d_{n+1}^2$ and hence
$$
\tau(d_na^*ad_n) = \tau(ad_n^2a^*) \leq \tau(ad_{n+1}^2a^*) = \tau(d_{n+1}a^*ad_{n+1})
$$
for all $n$ and all $a \in A$. We can therefore define $\tilde{\tau} : A^+ \to [0,\infty]$ such that
$$
\tilde{\tau}(a) = \lim_{n \to \infty} \tau(d_nad_n) = \sup_n \tau(d_nad_n) .
$$
Note that $\tilde{\tau}$ is a weight.
It follows from Lemma \ref{18-11-22gx} that $\tau(d_n \ \cdot \ d_n)$ is (the restriction to $A^+$ of) a bounded positive linear functional on $A$ and hence also that $\tilde{\tau}$ is lower semi-continuous. This is used in the following calculation. Let $a \in A$. Then
\begin{align*}
&\tilde{\tau}(aa^*) = \lim_{n \to \infty}\tilde{\tau}(ad_n^2a^*) = \lim_{n \to \infty} \lim_{k \to \infty} \tau(d_kad_n^2a^*d_k) \\
& =  \lim_{n \to \infty} \lim_{k \to \infty} \tau(d_na^*d_k^2ad_n)  =  \lim_{n \to \infty}  \tau(d_na^*ad_n)  = \tilde{\tau}(a^*a) ;
\end{align*}
i.e. $\tilde{\tau}$ is a trace. Since 
$$
\tilde{\tau}(a^*a) = \lim_{n \to \infty} \tau(d_na^*ad_n) = \lim_{n \to \infty} \tau(ad_n^2a^* ) \leq \tau(aa^*) = {\tau}(a^*a),
$$ 
we conclude that $\tilde{\tau}(b) \leq \tau(b)$ for all $b \in A^+$, i.e. (a) holds. To see that so does (b), let $\phi$ be a lower semi-continuous weight on $A$ such that $\phi(a) \leq \tau(a)$ for all $a \in A^+$. Let $b \in A$. Then
\begin{align*}
& \phi(b^*b) = \lim_{n \to \infty} \phi(b^*d_n^2b) \leq \lim_{n \to \infty} \tau(b^*d_n^2b) \\
&= \lim_{n \to \infty} \tau(d_n bb^*d_n) = \tilde{\tau}(bb^*) = \tilde{\tau}(b^*b) .
\end{align*}

 To prove that $\tau_* = {\tilde{\tau}}_*$ on $K_0(A)$ we prove that $\tau^\dagger_* = {\tilde{\tau}}^\dagger_*$ on $K_0(A^\dagger)$, which will suffice. To this end note that $\mathcal M_{\tau}^\dagger \subseteq \mathcal M_{\tilde{\tau}}^\dagger$ since $\tilde{\tau} \leq \tau$. By Theorem \ref{19-11-22d} it suffices therefore to show that $\tau^\dagger \otimes \Tr_n(e) = \tilde{\tau}^\dagger \otimes \Tr_n(e)$ when $e$ is a projection in $M_n(\mathcal M_{\tau}^\dagger)$. For this note that it follows from the first part of the proof of Lemma \ref{19-11-22c} that there is a $k \in \mathbb N$ and a projection $e' \in M_n(D_k^\dagger)$, where $D_k := d_kAd_k$, such that $\tau^\dagger \otimes \Tr_n(e) =\tau^\dagger \otimes \Tr_n(e')$ and $\tilde{\tau}^\dagger \otimes \Tr_n(e) =\tilde{\tau}^\dagger \otimes \Tr_n(e')$. This completes the proof because $\tau^\dagger \otimes \Tr_n(e') = \tilde{\tau}^\dagger \otimes \Tr_n(e')$ since $\tau$ and $\tilde{\tau}$ clearly agree on $D_k$.
 
 Consider then the general case. It follows from Lemma \ref{18-11-22gx} that $\tau$ restricts to a densely defined trace on any separable $C^*$-subalgebra $D$ of $A$. It follows therefore from the preceding that there is a lower semi-continuous trace $\tilde{\tau}_D$ on $D$ with the desired properties relative to $\tau|_D$. It will be enough to show that
 $$
 {\tilde{\tau}_{D_2}}|_{D_1} = \tilde{\tau}_{D_1}
 $$
 when $D_1$ and $D_2$ are separable $C^*$-subalgebras of $A$ such that $D_1 \subseteq D_2$. To this end observe that there are approximate units $\{d_n\}$ for $D_1$ and $\{d'_n\}$ for $D_2$ with the properties specified in Lemma \ref{19-11-22fx} such that 
 $$
 \lim_{n \to \infty} \tau(d_nad_n) = \tilde{\tau}_{D_1}(a) \ \ \forall a \in D_1^+
 $$
 and
 $$
  \lim_{n \to \infty} \tau(d'_nad'_n) = \tilde{\tau}_{D_2}(a) \ \ \forall a \in D_2^+ .
 $$
Let $a \in D_1^+$. Since $\tau(d_n \ \cdot \ d_n)$ is bounded, 
\begin{align*}
& \tau(d_nad_n) = \lim_{k \to \infty} \tau(d_n a^{\frac{1}{2}}{d'_k}^2 a^{\frac{1}{2}}d_n)  = \lim_{k \to \infty} \tau(d'_ka^{\frac{1}{2}}d_n^2a^{\frac{1}{2}}d'_k) \\
& = \tilde{\tau}_{D_2}(a^{\frac{1}{2}}d_n^2a^{\frac{1}{2}}) .
\end{align*} 
Since $\tilde{\tau}_{D_2}$ is lower semi-continuous this gives
$$
\tilde{\tau}_{D_1}(a) = \lim_{n \to \infty} \tau(d_nad_n) = \lim_{n \to \infty}  \tilde{\tau}_{D_2}(a^{\frac{1}{2}}d_n^2a^{\frac{1}{2}}) = \tilde{\tau}_{D_2}(a) .
$$
We can now define $\tilde{\tau} : A^+ \to [0,\infty]$ by
$$
\tilde{\tau}(a) := \tilde{\tau}_D(a),
$$
where $D \subseteq A$ is any separable $C^*$-subalgebra of $A$ containing $a$. It is straightforward to check that $\tilde{\tau}$ has the stated properties.
\end{proof}

The most famous traces that are not lower semi-continuous are the traces on the $C^*$-algebra $\mathbb K$ of compact operators on an infinite dimensional separable Hilbert space constructed by J. Dixmier in \cite{Di}. Dixmiers traces are used in non-commutative geometry as described in Chapter IV of \cite{Co2} and Dixmiers construction has been investigated and extended by many mathematicians. An almost defining feature of these traces is that they invariably vanish on the operators of finite rank, and hence in particular on all projections in $\mathbb K$. It follows from this that if we apply Proposition \ref{02-01-23} with a Dixmier trace in the role of $\tau$, the resulting lower semi-continuous trace $\tilde{\tau}$ is the zero trace. In contrast, if we apply Proposition \ref{02-01-23} to the trace $\tau : \mathbb K^+ \to [0,\infty]$ defined by
$$
\tau(a) = \begin{cases} \Tr(a) &  \text{when $ a$ has finite rank}\\ \infty  & \text{when $a$ does not have finite rank,}\end{cases}
$$
the result is the usual trace $\Tr$ on $\mathbb K$.

\begin{example}\label{27-10-23}
Let $X$ be a locally compact Hausdorff space, $A$ a $C^*$-algebra and $\tau$ a non-zero densely defined trace on $A$. Set 
$$
B:= C_0(X) \otimes A = C_0(X,A);
$$ 
the $C^*$-algebra of $A$-valued continuous functions on $X$ vanishing in norm at infinity. When we assume that $X$ is second countable and not compact, we can choose a sequence $\{x_i\}_{i=1}^\infty$ of points in $X$ such that $i \neq j \Rightarrow x_i \neq x_j$ and $\lim_{i \to \infty} x_i = \infty$ in the sense that for every compact subset $K \subseteq X$ there is a $N_K \in \mathbb N$ such that $x_i \notin K$ when $i \geq N_K$. Let $\{t_i\}_{i=1}^\infty$ be a sequence of positive real numbers such that $\lim_{i \to \infty} t_i = \infty$, and let finally $\xi \in \beta \mathbb N \backslash \mathbb N$ be a free ultrafilter in $\mathbb N$ which we think of as an element of the Stone-\v{C}ech remainder of $\mathbb N$. Define $\mu : B^+ \to [0,\infty]$ such that
$$
\mu(b) = \lim_{N \to \xi} \frac{1}{N} \sum_{n=1}^N t_n \tau(b(x_n))
$$
when 
$\left\{\frac{1}{N} \sum_{n=1}^N t_n \tau(b(x_n))\right\}_{N \in \mathbb N}$
 is bounded and 
 $$\mu(b) = \infty
 $$ when $\left\{\frac{1}{N} \sum_{n=1}^N t_n \tau(b(x_n))\right\}_{N \in \mathbb N}$ is not bounded. It is clear that $\mu$ is a densely defined trace on $B$; it is zero on elements of $C_0(X,A)$ that take values in $\mathcal M_\tau$ and have compact support. To see that $\mu$ is not zero and not lower semi-continuous, choose an element $a \in A^+$ such that $0 < \tau(a) < \infty$. Choose continuous functions $f_n \in C_0(X)$, $n \in \mathbb N \backslash \{0\}$, with mutually disjoint compact supports such that $0 \leq f_n \leq 1$, $f_n(x_n) = 1$ and $f_n(x_i) = 0, \ i \neq n$. Then
 $$
 g := \sum_{n=1}^\infty t_n^{-1}f_n \otimes a  \in C_0(X,A)^+,
 $$
and $\mu(g) = \tau(a)$. Hence $\mu$ is not zero. Since $g = \lim_{L \to \infty} \sum_{n=1}^L t_n^{-1} f_n \otimes a$ and
$$
\mu\left(  \sum_{n=1}^L t_n^{-1} f_n \otimes a\right) = 0
$$
for all $L$, we see that $\mu$ is not lower semi-continuous.
\end{example}

The algebra $A$ in Example \ref{27-10-23} is not simple and all the examples of traces that are not lower semi-continuous exhibited in Example \ref{27-10-23} share the property with the Dixmier traces that they vanish on projections. The next example does not have any of these two deficiencies, although the underlying idea is the same.

\begin{example}\label{23-10-23a}

 Let $B(\mathbb N,\mathbb Q)$ denote the set of bounded sequences of rational numbers.
Let $G$ be the additive subgroup of $B(\mathbb N,\mathbb Q)$ consisting of the sequences $\{a_n\}_{n=1}^\infty$ in $\mathbb Q$ with the property that there is an $N \in \mathbb N$ such that 
$$
a_i =  \frac{q}{i^2}
$$ 
for some $q \in \mathbb Q$ and all $i \geq N$. Set
$$
G^+ = \left\{ \{a_n\}_{n=1}^\infty \in G : \ a_n > 0 \ \forall n \in \mathbb N \right\} \cup \{0\} .
$$
Then $(G,G^+)$ is a countable simple dimension group. Set 
$$
\Sigma := \left\{  \{a_n\}_{n=1}^\infty \in G^+ : \ a_n < 1 \ \ \forall n \in \mathbb N \right\} .
$$
Then $\Sigma$ is a scale in $(G,G^+)$ and there is therefore a simple AF algebra $A$ such that
$$
(K_0(A),K_0(A)^+,\Sigma(A)) = (G,G^+,\Sigma) .
$$
This follows from \cite{E1} and \cite{EHS}. If necessary, see Proposition 1.4.5 in \cite{R}.

For each $k \in \mathbb N$ there is a bounded trace $\tau_k$ on $A$ such that 
$$
{\tau_k}_*\left( \{a_n\}_{n=1}^\infty \right) = a_k. 
$$
Note that $\left\|\tau_k\right\| \leq 1$.
Let $\omega$ be a free-ultrafilter in $\mathbb N$, which we consider as an element of the Stone-\^Cech remainder of $\mathbb N$. Define
$$
\tau : A^+ \to [0,\infty]
$$
such that
$$
\tau(a) =  \lim_{N \to \omega} \frac{1}{N} \sum_{k=1}^N \tau_k(a)k^2
$$ 
when $\left\{\frac{1}{N} \sum_{k=1}^N \tau_k(a)k^2\right\}_{N=1}^\infty$ is bounded and
$$
\tau(a) = \infty
$$
when $\left\{\frac{1}{N} \sum_{k=1}^N \tau_k(a)k^2\right\}_{N=1}^\infty$ is unbounded. Then $\tau$ is clearly a trace. To show that $\tau$ is non-zero fix $j \in \mathbb N$ and let $p_j \in A$ be a projection such that
$$
\tau_k(p_j) = \begin{cases} \frac{1}{k^2}, & \ k > j, \\ \frac{1}{2}, & \ k = j, \\ 2^{-j}, & \ k < j . \end{cases}
$$ 
Then
$$
\tau_k\left( \sum_{j=1}^\infty \frac{p_j}{j^2} \right)k^2 = \frac{1}{2}+  \sum_{j <k} \frac{1}{j^2} + \sum_{j > k} \frac{2^{-j} k^2}{j^2}  ,
$$ 
and hence
$$
\tau\left(\sum_{j=1}^\infty \frac{p_j}{j^2}\right) = \frac{1}{2} + \sum_{j=1}^\infty \frac{1}{j^2} =\frac{1}{2} + \frac{\pi^2}{6} .
$$
We note that for $N > M$,
\begin{align*}
& \sum_{k=1}^N \tau_k\left( \sum_{j=1}^M \frac{p_j}{j^2}\right)k^2 \\
& = \sum_{k=M+1}^N \sum_{j=1}^M \frac{1}{j^2} + \sum_{k=1}^M \tau_k\left( \sum_{j=1}^M \frac{p_j}{j^2}\right)k^2 \\
& =\sum_{k=M+1}^N \sum_{j=1}^M \frac{1}{j^2} + \sum_{k=1}^M \left( (\sum_{j=1}^{k-1} \frac{1}{j^2}) + \frac{1}{2} + ( \sum_{j=k+1}^M \frac{2^{-j}}{j^2} k^2) \right) .
\end{align*}
Since 
$$
(\sum_{j=1}^{k-1} \frac{1}{j^2}) + \frac{1}{2} + ( \sum_{j=k+1}^M \frac{2^{-j}}{j^2} k^2)
$$
is bounded by a constant $C$ so that 
$$
\sum_{k=1}^M \left( (\sum_{j=1}^{k-1} \frac{1}{j^2}) + \frac{1}{2} + ( \sum_{j=k+1}^M \frac{2^{-j}}{j^2} k^2) \right)
$$
is bounded by $MC$, and since
$$
\sum_{k=M+1}^N \sum_{j=1}^M \frac{1}{j^2} = (N-M) \sum_{j=1}^M \frac{1}{j^2}
$$
we find that 
$$
\tau\left(  \sum_{j=1}^M \frac{p_j}{j^2}\right) = \sum_{j=1}^M \frac{1}{j^2} .
$$
Thus 
$$
\tau\left(\sum_{j=1}^M \frac{p_j}{j^2}\right) \leq  \tau\left(\sum_{j=1}^\infty \frac{p_j}{j^2}\right)-\frac{1}{2} 
$$
for all $M$. Since $\lim_{M \to \infty}\sum_{j=1}^M \frac{p_j}{j^2} = \sum_{j=1}^\infty \frac{p_j}{j^2}$ this shows that $\tau$ is not lower semi-continuous. To see that $\tau$ is densely defined note that for any projection $q \in A$ there is an $N \in \mathbb N$ such that 
$$
\tau_k(q) \leq \tau_k(N p_1) \leq N\tau_k\left(\sum_{j=1}^\infty \frac{p_j}{j^2}\right)
$$ for all $k \in \mathbb N$, implying that
$$
\tau(q) \leq N \tau\left( \sum_{j=1}^\infty \frac{p_j}{j^2}\right) = N(\frac{1}{2} + \frac{\pi^2}{6}) < \infty .
$$ 
Since every element of $A^+$ can be approximated in norm by a linear combination of projections with non-negative coefficients, it follows that $\tau$ is densely defined. Note that $\tau$ does not vanish on any non-zero projection. Indeed, when $e$ is a non-zero projection in $A$ there is a positive rational number $q$ such that $\tau_k(e) = q \frac{1}{k^2}$ for all sufficient large $k$, leading to the conclusion that $\tau(e) =q > 0$. 

Like the constructions in Example \ref{27-10-23} the preceding can also be varied. To see one possible variation, let $0 < \epsilon < 1$ and define 
$$
\tau_\epsilon : A^+ \to [0,\infty]
$$
such that
$$
\tau_\epsilon(a) =  \lim_{N \to \omega} \frac{1}{N} \sum_{k=1}^N \tau_k(a)k^{1+\epsilon}
$$ 
when $\left\{\frac{1}{N} \sum_{k=1}^N \tau_k(a) k^{1+\epsilon} \right\}_{N=1}^\infty$ is bounded and
$$
\tau_\epsilon(a) = \infty
$$
when $\left\{\frac{1}{N} \sum_{k=1}^N \tau_k(a)k^{1+\epsilon}\right\}_{N=1}^\infty$ is unbounded. Then
\begin{align*}
&\tau_k\left(\sum_{j=1}^\infty j^{-(1+\epsilon)} p_j\right)k^{1+\epsilon} 
 = \sum_{j=1}^\infty \left(\frac{k}{j}\right)^{1+\epsilon} \tau_k(p_j) \\
 & = \frac{1}{2} + \sum_{j < k} \frac{1}{k^2}  \left(\frac{k}{j}\right)^{1+\epsilon} + \sum_{j > k } 2^{-j}  \left(\frac{k}{j}\right)^{1+\epsilon} \\
 & =\frac{1}{2} + \frac{k^{1+\epsilon}}{k^2}\sum_{j < k} \frac{1}{ j^{1+\epsilon}} + \sum_{j > k } 2^{-j}  \left(\frac{k}{j}\right)^{1+\epsilon} .
\end{align*}
Since $\lim_{k \to \infty} \sum_{j > k } 2^{-j}  = \lim_{k \to \infty}  \frac{k^{1+\epsilon}}{k^2}\sum_{j < k} \frac{1}{ j^{1+\epsilon}} =0$ we have that
$$
\lim_{k \to \infty} \tau_k\left(\sum_{j=1}^\infty j^{-(1+\epsilon)} p_j\right)k^{1+\epsilon}  = \frac{1}{2} ,
$$
and hence that 
$$\tau_\epsilon\left(\sum_{j=1}^\infty j^{-(1+\epsilon)} p_j\right) = \frac{1}{2} .
$$
This shows that $\tau_\epsilon$ is non-zero. To see that $\tau_\epsilon$ is not lower semi-continuous, densely defined and not proportional to $\tau$ note that $\tau_\epsilon(e) = 0$ for all projections $e \in A$.

\end{example}

Of course, the sum of the usual trace and a Dixmier trace on $\mathbb K$ gives also examples of densely defined traces on a simple $C^*$-algebra which are not lower semi-continuous and do not vanish on projections.

\end{document}